\DeclareMathOperator{\EX}{\mathbb{E}} % expected value
\providecommand{\keywords}[1]{\textbf{\textit{Keywords---}} #1}
\numberwithin{table}{section}
\numberwithin{equation}{section}
\newtheorem{theorem}{Theorem}
\newtheorem{proposition}[theorem]{Proposition}
\theoremstyle{remark}
\newtheorem{remark}{Remark}
\newcommand{\footremember}[2]{%
	\footnote{#2}
	\newcounter{#1}
	\setcounter{#1}{\value{footnote}}%
}
\title{Estimating customer delay and tardiness sensitivity from periodic queue length observations}
\author{%
	Liron Ravner\footremember{alley}{Department of Statistics, University of Haifa, 3491077 Haifa, Israel. lravner@stat.haifa.ac.il}%
	\and Jiesen Wang  \footremember{trailer}{School of Mathematics and Statistics, The University of Melbourne, 3010 Victoria, Australia.} \footnote{School of Mathematical Sciences, Tel Aviv University. jiesenwang@gmail.com}
%	\and Jesus Quintana\footrecall{alley} \footnote{Mexico?}%
%	\and Uli Kunkel\footrecall{trailer} \footnote{Germany?}%
}
\date{}
\begin{document}
\maketitle

\begin{abstract} 
	A single server commences its service at time zero every day. A random number of customers decide when to arrive to the system so as to minimize the waiting time and tardiness costs. The costs are proportional to the waiting time and the tardiness with rates $\alpha$ and $\beta$, respectively. Each customer's optimal arrival time depends on the others' decisions, thus the resulting strategy is a Nash equilibrium.  This work considers the estimation of the ratio $\displaystyle\theta\equiv \beta/(\alpha+\beta)$ from queue length data observed daily at discrete time points, given that customers use a Nash equilibrium arrival strategy. A method of moments estimator is constructed from the equilibrium conditions. Remarkably, the method does not require estimation of the Nash equilibrium arrival strategy itself, or even an accurate estimate of its support. The estimator is strongly consistent and the estimation error is asymptotically normal. Moreover,  the asymptotic variance of the estimation error as a function of the queue length covariance matrix (at sampling times) is derived.  The estimator performance is demonstrated through simulations, and is shown to be robust to the number of sampling instants each day. 
\end{abstract}

\keywords{Strategic arrival times to a queue; Parameter Estimation; Transient queueing;  Tardiness }
%strategic timing of arrivals

% Customers have two considerations when making their decisions. The first factor is to wait as short as possible, which is measured by the waiting time. The second factor is to get into service soon after the service commencement, which is quantified by the interval measured from time $0$ to the service completion time and is referred to as tardiness.
\section{Introduction}

When the quality of a product or a service deteriorates with time, an important consideration of customers is to avoid service delay. It occurs quite often that customers arrive at a supermarket earlier in order to obtain fresh products. An expensive managing warehouse always expects items to be picked up as soon as possible (see \citet{ELKS93}). The earlier a customer gets in a concert, the better seat she can take (see  \citet{JJS11}). In these cases, tardiness, measured from the opening time to the service commencement time of a customer, needs to be incorporated as a measure of the cost. 

{Customers typically try to avoid congestion.  In particular, besides not being late, a customer hopes to wait for as little time as possible.} Assume that the cost is linear in time with rates $\alpha$ and $\beta$ for the waiting and tardiness, respectively. Then in deciding when to arrive, a customer needs to balance between arriving earlier and waiting less, and the optimal decision is affected by the ratio $\beta/(\alpha+\beta)$. While the literature on strategic arrivals to queueing systems with tardiness costs provides considerable methods to describe and obtain the Nash equilibrium (e.g., \cite{H13,JS13}), it assumes that the system parameters are known beforehand. The problem of estimation concerning the cost rates has received little attention. In this paper, we estimate the ratio $\beta/(\alpha+\beta)$ from the queue length information.

We consider a single server queueing system with a first-come first-served (FCFS) discipline, which commences its service at time zero every day. The service time is exponentially distributed with rate $\mu$. The system can have a pre-imposed closing time or not. Let $T$ be the service closing time, then the system serves all the customers who arrive prior to time $T$ (inclusive). When $T = \infty$, the system closes after it serves all arriving customers. Every day, a Poisson number, with mean $\lambda$, of customers each decide when to arrive to this system,  so as to minimize their expected waiting and tardiness cost. When $T = \infty$, as $\lambda$ is finite, the system still serves all the arriving customers in a finite time. Thus, we still call it a day.

The FCFS queueing discipline implies that the expected cost of a customer depends on the number of customers who arrived before her, which is affected by the other customers' decisions. As a consequence, the best response of each customer is a function of the other customers' actions. {It is thus appropriate to model this interaction as a non-cooperative game with the solution concept of a Nash equilibrium strategy. We focus on the symmetric mixed Nash equilibrium,  which dictates that all customers have the same strategy, which is a probability distribution on $(-\infty, T]$.}

Our general framework allows for two model variations in which customers queueing before time zero is possible, or not. As in \citet{H13}, we refer to these variations as {\it with early birds} and {\it without early birds}. In the with early birds case, customers arriving before time zero are served according to a FCFS discipline.  In the without early birds case, customers who arrive before time zero are served in a random order. Hence, arriving before time zero does not bring extra benefit but incurs additional waiting costs, compared with arriving at time zero. So in the Nash equilibrium, customers do not arrive 
at any time $t<0$.

We assume that customers arrive according to the Nash equilibrium every day, and their decisions are independent on different days. The values of $\alpha$ and $\beta$ are known to the customers but not to the system manager.  To estimate the ratio $\theta=\beta/(\alpha+\beta)$, the system manager can sample the queue length at a collection of time instants every day. Note that although the observations from different days are independent, the observations from different time points in one day are not. We assume that the sampling instants are the same on all days. An estimator for $\theta$ is then derived by using the property that in Nash equilibrium the expected cost is equal at any time with a positive arrival rate.

\subsection{Literature review}

\citet{H13} derived the Nash equilibrium arrival distribution for the system that this work focuses on. The study of the strategic arrival time choice in queueing models goes back to \citet{GH83}, which studied a single server system with a Poisson number of customers that decide when to arrive with the goal of minimizing their expected waiting time. The Nash equilibrium arrival distribution is characterized as a solution of a system of functional differential equations that satisfy a condition of constant cost throughout the arrival support. \citet{JJS11} considered a concert queueing
game with waiting and tardiness costs using a fluid approximation which enables closed form derivation of the equilibrium arrival distribution. \citet{JS13} studied the stochastic version of the concert queueing game with a general number of customers, and characterized the symmetric Nash equilibrium arrival distribution in terms of a set of differential equations, and argued that it is absolutely continuous. In particular, they proved that a unique equilibrium exists and it is symmetric. \citet{HR15} studied strategic timing of arrivals to a multi-server loss system, where customers tried to maximize their probability of receiving the service. Comprehensive reviews and summaries of this line of research can be found in \citet{HR20} and \citet[Section 4.1]{H16}.

Most of the literature that deals with queueing process estimation concerns the arrival and service
rates, and describes methods that require continuous observation over an interval of time.  To cite but a few, see for example  \citet{BR87},  \citet{A94}, \citet{AB94}, \citet{BP88}, \citet{BP99}. See \citet{ANT21} for a comprehensive survey of parameter estimation in queues. In \citet{IRM20}, the authors considered a system with strategic customers who can choose to balk after being aware of information about the delay, and estimated the patience distribution and the corresponding potential arrival rate. \citet{RC11} estimated the relative perceived value of patients waiting time, which is a ratio of the cost of the customers’ waiting time to the cost of the server’s idle time. Their method relies on constructing an estimation equation from the optimality condition of the stationary expected cost of the system.  However,  as opposed to our setting, customers are not strategic and the underlying distribution of the queueing process is known.  

\subsection{Outline of the methods and contribution}
\label{subsec:outline}

Before presenting the estimation method we need to first look into the Nash equilibrium dynamics. A symmetric Nash equilibrium is an arrival strategy that it is optimal for any customer if it is also used by all others. We denote the Nash equilibrium distribution by $F_e$, the corresponding probability density by $f_e$. The full details of deriving the symmetric equilibrium can be found in \citet{H13}. We only consider the non-trivial case where $T$ is large enough such that there are customers willing to arrive after time $0$. For the sake of brevity we primarily focus on the \textit{no early bird} variation where there are no arrivals before time $0$ and no service closing time (i.e., $T=\infty$). However, in Section \ref{sec:ext}, we show that with modifications the method can be applied to the other model variations as well. Moreover, the methodology in this work can be extended to other queueing systems with strategic choice of arrival times.

{
Let $C(t), W(t) \geq 0$ be the cost and waiting time, respectively, for a customer who arrives at time $t> 0$,
then
\begin{align*}
C(t)=\alpha W(t)+\beta \, (t+W(t)).
\end{align*}
} If all other customers use $F_e$, her expected cost is 
\begin{align} \label{eq:costconstant}
	\mathbb{E}_{F_e}  [C(t)]=\alpha \, \mathbb{E}_{F_e}  [W(t)]+\beta \, (t+\mathbb{E}_{F_e}[W(t)]) \,.
\end{align}
In equilibrium $\mathbb{E}_{F_e}[C(t)]=c$ for any $t$ in the support of the arrival distribution, where $c>0$ is a constant, and ${\mathbb{E}_{F_e}[C(t)]} \geq c$ for any $t$ not in the support. This type of equilibrium is also known as a Wardrop equilibrium \citep{W52}. 

The first step towards estimating $\theta$ is to estimate the expected waiting time at sampling instants within the support of the arrival distribution, which is a linear function of the expected number of customers in the queue due to the exponential service assumption and the FCFS discipline. Estimation of the support will be explained in detail in Section \ref{subsec:ES}. Suppose $s,t$ are in the support with $0<s <t$, then we can express the expected costs {$ \mathbb{E}_{F_e}  [C(s)]$ and $ \mathbb{E}_{F_e}  [C(t)]$} in terms of $\alpha$, $\beta$ and the expected waiting times of customers arriving at the two times, using Equation \eqref{eq:costconstant}. By setting {$\mathbb{E}_{F_e}  [C(s)]= \mathbb{E}_{F_e}  [C(t)]$} we will show that $\theta$ can be expressed in terms of the expected number of customers at the two time instants:  {As service times are exponential, 
$$\mathbb{E}_{F_e}[W(t)]=\mathbb{E}_{F_e}\left[\mathbb{E}_{F_e}[W(t)|Q(t)]\right]=\frac{\mathbb{E}_{F_e}[Q(t)]}{\mu},$$
where $Q()$ is the queue length process.  Hence, by \eqref{eq:costconstant}, $$\theta=\frac{\mathbb{E}_{F_e}[Q(t)]-\mathbb{E}_{F_e}[Q(s)]}{\mu(s-t)}.$$
This yields a moment estimation equation for any pair of sampling instants $(s,t)$. }If the queue length is sampled at a collection of times, then a refined estimator is given by taking average of the estimators corresponding to all pairs. 

We demonstrate the performance of our estimator through asymptotic analysis and simulation studies.  Specifically, we prove strong consistency and asymptotic normality of the resulting estimator. The proofs rely on establishing the law of large numbers and central limit theorems for the expected queue length estimators and then applying the continuous mapping theorem. We further derive an expression for the asymptotic variance of the estimation error as a function of the covariance matrix of the queue lengths at sampling times. We also explain how the asymptotic variance can be numerically approximated. Remarkably, the asymptotic performance is guaranteed even if the queue length is only sampled three times every day (as long as the sampling times are in the positive support of the equilibrium arrival distribution). However, sampling at multiple instants and spacing these instants far apart can improve the statistical efficiency in terms of the asymptotic variance. This is illustrated through simulation experiments in Section~\ref{section:Estimator}. The main finding of the simulation study is that only a small number of sampling instants are required to reduce the variance of the estimation error. In other words, there is almost no difference between continuous observation of the queue length and sampling it at a small number of time instants.

%\begin{figure}
%	\centering
%	\includegraphics[width=0.8\linewidth]{chapter-6/boxPlot.png}
%	\caption{Empirical confidence intervals based on 20 simulations with the same observation instants but different number of days. The true $\theta \,(\approx0.091)$ is represented by the dotted line.} \label{fig:box}
%\end{figure}
%In the simulation study we set the sampling instants each day and the number of days. For a setting, we run 20 simulations to explore the variance and bias of the estimator. Then we change the number of days in the setting, and repeat this process. Figure \ref{fig:box} depicts boxplots of the estimates from 20 simulations, for the number of days equaling to $100, \, 500, \,1000$ with observation time instants $\{0, \delta, 2\delta, \dots\}$ where $\delta = 0.001$. The estimates are shown not to be significantly different, in terms of the bias, while the variance decreases as the number of observation days increases. We will illustrate  in detail how the estimator variance and bias are affected by the number of sampling instants each day and the number of days in Section \ref{section:NumericalExamples}.

Our study constructs an estimator of utility parameters of strategic customers by taking advantage of the queueing dynamics brought corresponding to a Nash equilibrium. Moreover, to the best of the authors' knowledge this is the only work to do so for a transient queueing setting. Second, the estimation methodology is applicable for a number of systems with strategic customers whose choice is of the Wardrop equilibria type. %\citep{DS69}.  
The setting that customers are strategic is essential in this work, since our method makes use of the properties of expected cost faced by customers arriving according to the Nash equilibrium distribution. Third, the estimator we propose needs substantially less information than standard queueing inference techniques (see \citet{ANT21}).  Namely, it does not require continuous observations, or an accurate estimate of the support of the equilibrium arrival distribution. In fact, with only two points in the support, we are able to estimate the ratio $\theta$. We explain how to choose the two points in detail in Section \ref{section:Estimator}. Moreover, as explained above, simulation analysis suggests that the estimator is robust to the number of sampling instants each day. 

% This is the first work, to the best of our knowledge, in which the estimation of the cost ratio is carried out for a queueing system where strategic customers decide when to arrive.

The remainder of the paper is organised as follows. In Section \ref{section:PL} we explain the Nash equilibrium arrival distribution and how to numerically calculate it. In Section \ref{section:Estimator} the estimator for $\theta$ is presented for the case where there are no arrivals before time $0$. Section \ref{section:Aanalysis} presents the asymptotic analysis of the estimator. In Section \ref{section:NumericalExamples} we provide several numerical examples to demonstrate the effect of the number of the observation days and the sampling instants on the estimator. In Section \ref{sec:ext} we modify the estimator for the cases where there are arrivals before time $0$ or a service closing time. Section \ref{sec:ext} further discusses the potential to extend our estimation method to more elaborate systems with strategic arrival time choice.

\section {Preliminaries} \label{section:PL}

In this section, we summarize the results of \citet{H13} and \citet{JS13} that characterize the Nash equilibrium arrival distribution and the associated expected costs. In Appendix~\ref{subsec:da} we illustrate how to approximate the equilibrium distribution and the expected cost for the case where there are no arrivals before time $0$, via the finite difference method. 
 
%\subsection{The Nash equilibrium arrival distribution}

Let $Q(t)$ be the queue length at time $t$, and $P_k(t) \equiv P(Q(t) = k)$ for $k\geq 0$. In the case where there are no arrivals before time $0$, if there is no closing time, there exist $t_b > t_a > 0$ such that the equilibrium arrival distribution is given by
\begin{align} 
	&f_e(t) = 
	\frac{\mu}{\lambda} \, (1-P_0(t) - \theta) \ \qquad t \in [t_a, t_b] , \label{eq:wof1}\\
	& F_e(0) = p_e \equiv 1- \int_{t = t_a}^{t_b} f_e(t) \, dt \label{eq:ed2}   \,.
\end{align}
That is, there is an atom of size $p_e>0$ at time zero, no arrivals during the interval $(0,t_a)$, a positive density $f_e(t)$ along $[t_a,t_b)$, and $f_e(t) = 0$ for $t \geq t_b$.  

Note that if there is a service closing time $T$, then Nash equilibrium has three cases \citep[Theorem 3.2]{H13}. If $T< t_a$, where $t_a$ is determined by \eqref{eq:wof1} and \eqref{eq:ed2}, then the Nash equilibrium is to arrive at time zero with probability $1$. We do not consider this case in our paper. If $t_a \leq T < t_b$, the equilibrium arrival distribution is similar to the case with no closing time, but the arrival distribution support is $\{0\} \cup [t_a, T]$ and $f_e(T) > 0$. In particular, the values of $p_e$, $f_e(t)$, and $t_a$ satisfy
\begin{align}
	& f_e(t) = \frac{\mu}{\lambda} \, \left(1-P_0(t) - \theta \right), \qquad t \in [t_a, T], \label{eq:wc1} \\
	& p_e = 1- \int_{t = t_a}^{T} f_e(t) \, dt \,.
\end{align}
If $T \geq t_b$, the equilibrium arrival distribution is the same as the case with no service closing time. % \textcolor{red}{\citet[Theorem 3.2]{H13} proved that the equilibrium arrival distribution $f_e$ is continuous on $[t_a,t_b]$ or $[t_a,T]$.}

In the case where arrivals before time zero are allowed and there is no closing time, there exist $w, t_w > 0$ such that the equilibrium arrival distribution has
\begin{align}
	&f_e(t) = 
	\begin{cases} \label{eq:wf1}
		\displaystyle\frac{\mu}{\lambda} \, (1-\theta), & -w \leq t <0, \\[+6pt]
		\displaystyle\frac{\mu}{\lambda}  \,  (1-P_0(t) - \theta), & 0 \leq t \leq t_w,\, 
	\end{cases} \\
	& \int_{0}^{t_w} f_e(t)dt = 1-w \, \frac{\mu}{\lambda} (1-\theta)   \label{eq:wf2}\,.
\end{align}
There is a constant density along $[-w,0]$, and positive density along $[0,t_w]$. Note that $P_0(t)$ in \eqref{eq:wof1}, \eqref{eq:wc1}, and \eqref{eq:wf1} depends on the corresponding Nash equilibrium arrival distribution. It can be seen from Equation \eqref{eq:wof1}-\eqref{eq:wf2} that the distribution $F_e$ is determined by the ratio $\theta$, not the specific values of $\alpha$ and $\beta$. %\textcolor{red}{\citet[Theorem 2.3]{H13} proved that the equilibrium arrival distribution $f_e$ is continuous on $[-w,t_b]$ with the exception at $t = 0$.}
For all of the above cases \citet{JS13} proved that the equilibrium solution $F_e$ is unique and absolutely continuous (excluding a possible discontinuity at zero).

In the following analysis, we mainly focus on the case where there are no arrivals before time $0$ and no service closing time. Later, in Section~\ref{sec:ext}, we show that the estimator we propose can be easily adapted for other cases. 
Let $\mathbb{E}_{F_e}[C(t)]$ be as defined in Equation \eqref{eq:costconstant}, then a Nash equilibrium $F_e$ satisfies 
\begin{equation}\label{eq: NE1}
	\mathbb{E}_{F_e}[C(t)] =c, \qquad  \forall \, t\in \{0\} \cup [t_a,t_b]\,,
\end{equation}
and
\begin{equation} \label{eq:cost2}
	\mathbb{E}_{F_e}[C(t)]\geq c, \qquad \forall \, {t\notin \{0\}\cup [t_a, t_b]}\,.
\end{equation}
This is because a customer only randomize arrival time points if they result in the same cost. Secondly, the expected cost of a customer arriving at any point outside $\{0\} \cup [t_a,t_b]$ cannot be lower. Otherwise, a customer would be better off by deviating to such an arrival time.

The expected cost for the case without early birds can then be written as
\begin{equation} \label{eq:cost1}
	\mathbb{E}_{{F_e}}[C(t)]=
	\begin{cases}
		%\displaystyle\frac{\alpha+\beta}{\mu}\mathbb{E}_{{F_e}}[Q(t)] -\alpha t  & t < 0, \, \text{with early birds}\\
		%\displaystyle\frac{\alpha+\beta}{\mu} \mathbb{E}_{F_e}[Q(0)]  & t = 0, \, \text{with early birds}\\
		\displaystyle\frac{\alpha+\beta}{2\mu} \mathbb{E}_{F_e}[Q(0)],  & t = 0, \\[+6pt]
		\displaystyle\frac{\alpha+\beta}{\mu}\mathbb{E}_{{F_e}}[Q(t)]+\beta t,   & t> 0 \,.
	\end{cases}
\end{equation}
Observe that at time zero there is a positive mass of customers arriving together that are served in random order. Hence, a customer arriving at time zero will on average have to wait for half of the other customers that also arrived with her. To numerically obtain the equilibrium distribution $F_e$ and the associated cost, we employ a finite difference approximation. In particular, we assume that customers can choose to arrive at a time on a discrete grid $\mathcal{T} \equiv \{0, \delta, 2\delta, \ldots\}$ for a small $\delta>0$. Appendix~\ref{subsec:da} provides details of this approximation.
%Appendix~\ref{subsec:da} details how the equilibrium distribution \textcolor{blue}{$F_e$} and the associated cost can be approximated numerically.
%\textcolor{blue}{JS:I would like to introduce $\mathcal{T}$ here, since later we will use it.}
% \subsection{Discrete approximation} \label{subsec:da}

\section{Estimation of $\theta$} 
\label{section:Estimator}

{From now on we assume that customers arrive according to the symmetric Nash equilibrium arrival distribution $F_e$. We denote $q(t)=\mathbb{E}_{F_e}[Q(t)]$, while keeping in mind that all expectations are still with respect to the probability measure imposed by the arrival distribution $F_e$.} Assume that the system manager knows the value of $\mu$, but not $\theta$. It follows from Equation \eqref{eq:wof1} that $F_e$ is determined by $\theta$, $\lambda$, and $\mu$. Note that if $\mu$ is unknown it can be easily estimated, for example by observing a collection of service times. Moreover, the following estimation method does not require knowledge of $\lambda$. Thus, the distribution of the queue length process is a function of $\theta$ via $F_e$. The question is how to construct an estimator for $\theta$ from observations of the queue length.
By the definition of the Nash equilibrium, when others arrive according to $F_e$, each customer is indifferent between arriving at any $t \in \{0\} \cup [t_a,t_b]$. Hence,  Equations \eqref{eq: NE1} and \eqref{eq:cost1} imply that
\begin{equation} \label{eq:theta0}
	\frac{(\alpha+\beta)}{\mu} \,q(t) + \beta \,t \, = \, \frac{(\alpha+\beta)}{2 \, \mu} \, q(0) \, = \, c \qquad \forall \, t \in[t_a, t_b] \,.
\end{equation}
Then we can write down the expression for the expected cost of any two points in $\{0\} \cup [t_a,t_b]$, and derive $\theta$ in terms of the expected queue length at these two points. For any $t \in [t_a,t_b]$, we have
\begin{align} \label{eq:theta01}
	& \frac{(\alpha+\beta)}{\mu} \,q(t) + \beta \,t \, = \, \frac{(\alpha+\beta)}{2 \, \mu} \, q(0), \, \\
	& \Leftrightarrow  \, \, \theta = \frac{\beta}{\alpha+\beta} = -\frac{q(t)  - q(0) /2}{\mu \, t} \,, \notag
\end{align}
and for $s,t \in [t_a,t_b]$,
\begin{align} \label{eq:theta02}
    & \frac{(\alpha+\beta)}{\mu} \,q(s) + \beta \,s \, = \, \frac{(\alpha+\beta)}{\mu} \,q(t) + \beta \,t , \\
    &	\Leftrightarrow  \,\, \theta \, = -\frac{q(s) -q(t)}{\mu \, (s-t)} \,. \notag
\end{align}  
We conclude that $\theta$ can be expressed as a function of the expected queue length at any two observation instants in $\{0\} \cup [t_a,t_b]$. This enables the construction of a method of moments type estimator for $\theta$ from any two observation times. Multiple estimators are obtained when the queue length is observed at more than two time instants, and in that case the mean of all estimators can be taken. In particular, the queue length is inspected at the collection of times $\{t_i\}_{i=1}^m$, where $m>2$. The sampling process is repeated for $n\geq 1$ days, with the same sampling times every day. Towards the construction of the estimator we first detail the necessary preliminary steps: estimation of the expected queue lengths and of the support of the equilibrium arrival distribution, i.e., the values $t_a$ and $t_b$.

%%%
\subsection{Estimation of the expected queue lengths}

{Given the equilibrium arrival distribution $F_e$,  we can calculate the expected queue length $q(t)=\mathbb{E}_{F_e}[Q(t)]$. }
Algorithm \ref{alg:ql} in the appendix outlines how this is done using the discrete approximation scheme for the computation of $F_e$. Figure \ref{fig:qs} depicts $q(t)$, two observations can be made. Firstly, $q(t)$ is a linear function on $[t_a , t_b ]$, whose slope is $-\theta \mu$, this can also be derived from Equation \eqref{eq:theta02}. Second, if we extend the line to $t = 0$, then the point intersecting with $t = 0$ is $q(0)/2$, which can also be inferred from Equation \eqref{eq:theta01}. 

\begin{figure}[ht]
	\subcaptionbox{$\alpha = 2, \beta = 0.2, \lambda = 5, \mu= 1$}%
	{\includegraphics[width=0.5\linewidth]{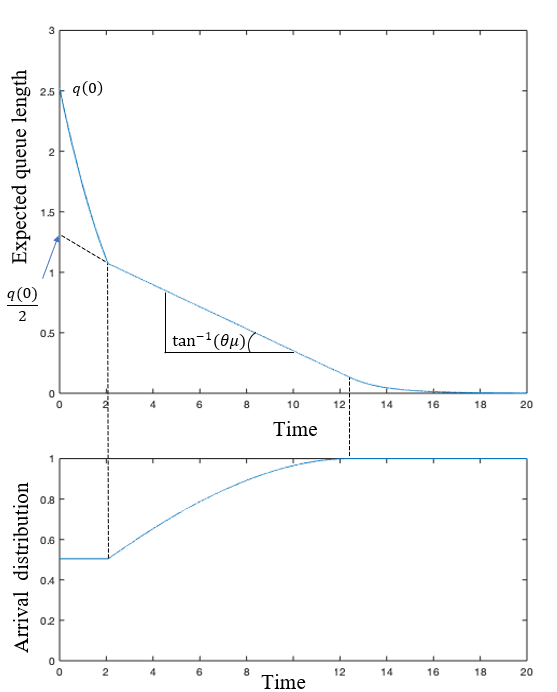}}
	\subcaptionbox{$\alpha = 2, \beta = 0.1, \lambda = 10, \mu= 1$}%
	{\includegraphics[trim={0cm 0.1cm 0cm 0cm},width=0.5\linewidth]{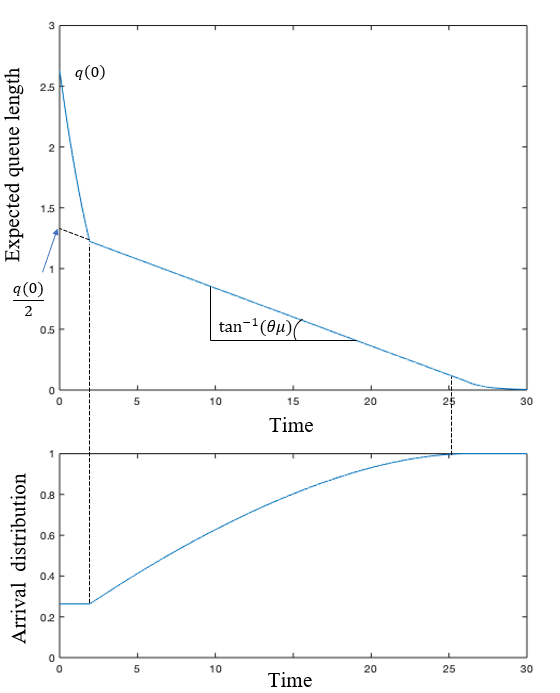}} 
	\caption{The Nash equilibrium arrival distribution and the expected queue length for the model with no arrivals before time zero and no service closing time.} \label{fig:qs}
\end{figure}

Equation \eqref{eq:theta01} and \eqref{eq:theta02} imply that two observation points in the support each day are enough to estimate $\theta$.  However,  observe that to use \eqref{eq:theta01} or \eqref{eq:theta02}, we first need to estimate $[t_a,t_b]$. To achieve this we need more than two observations as will be shown in Section \ref{subsec:ES}. 

Suppose we choose a collection of time instants to observe the queue length every day. Let $n$ and $m > 2$ be the number of observation days and the number of sampling instants every day, respectively. For $l = 1, \ldots, n, \,  i = 1, \ldots, m$, we denote by $t^{(l)}_i$ and $\Xi^{(l)}_i$ the $i$th observation time point and the queue length at that point on the $l$th day.  We assume that the observations are obtained at the same time instants every day, so we drop the index $l$ on $t_i$ and denote the samples on the $l$th day as $\{t_i, \, \Xi^{(l)}_i\}_{i = 1}^{m}$.  {Note that sampling at exactly the same time instants ensures that the observed daily queue length vectors are independent and identically distributed.}
%With independent observations from $n$ days, we can estimate the expected number of the queue length at $\{t_i\}_{i=1}^m$ by the sample mean. 
It follows from our assumption of statistical independence between days that $\{t_i, \, \Xi^{(l)}_i\}_{i = 1}^{m}$ are from independent realisations of the process. Since $\Xi_i^{(1)}, \Xi_i^{(2)}, \dots, \Xi_i^{(n)}$ are independent and identically distributed as $Q(t_i)$, the expected queue length can be estimated by the sample mean
\[
\hat{q}_n(t_i) \equiv \displaystyle \frac{1}{n}\sum_{l = 1}^{n} \Xi^{(l)}_i
\] 
at the $i$th sampling time instant. 

Figure \ref{fig:qsn} depicts $\hat{q}_n(t_i)$ for three simulations with $n = 1000$ and $\{t_i\}_{i = 1}^{m} = \mathcal{T}$, compared to the expected queue length $q(t)$. Note that both the simulation and the expected queue length first require numerical evaluation of $F_e$ using Algorithm \ref{alg:wo}. Observe that the queue length estimators yield an unbiased, but noisy, representation of the actual expected queue length process. It is therefore important to use multiple time instants throughout the day in order to reduce the variance of the estimation error. For illustration purposes a very fine grid is used ($m = T_s/\delta+1 = 20001$), however we will later show that a very coarse grid is actually sufficient for achieving very good estimation accuracy. The intuition here is that the queue lengths at very close time instants are highly correlated and thus it is more informative to sample the process with {a bigger space} between observation instants.

% In one simulation, we first calculate $F_e$ for a given value of $\alpha$ and $\beta$ using Algorithm \ref{alg:wo}. Then we run the simulation $n$ times by letting customers arrive according to $F_e$, and recorded the queue length at each $t \in \mathcal{T}$. Figure \ref{fig:qsn} depicts $\hat{q}_n(t_i)$ for three simulations with $n = 1000$ and $\{t_i\}_{i = 1}^{m} = \mathcal{T}$. 
% %which is the same grid that we used to solve the differential equations \eqref{eq:Qdynamics1} -\eqref{eq:Qdynamics3}.
% Note that although $\{t_i\}_{i = 1}^{m}$ can take any value along the positive real line, $\{t_i\}_{i = 1}^{m} \subseteq \mathcal{T}$ in the simulation, since we use the arrival distribution $F_e$ on $\mathcal{T}$ in the simulation. To make the comparison between the expected value and the mean, $q(t)$ is also plotted. We will show later that we do not need such a large value of $m$ to estimate $\theta$. We chose a fine grid here to display the estimation of the expected queue length for all the time points.
\begin{figure}
	\centering
	\includegraphics[width=0.75\linewidth]{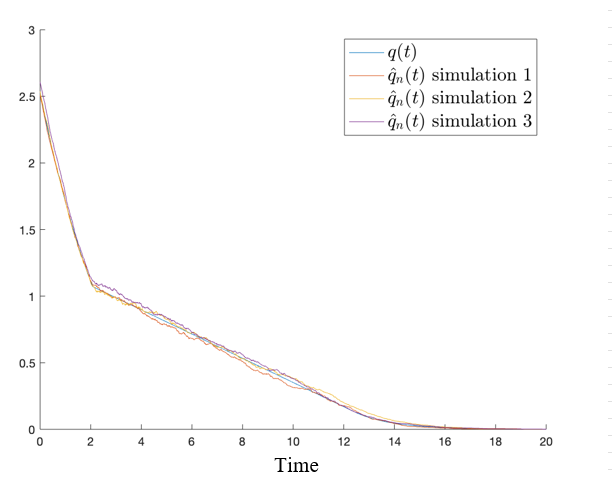}	\caption{The sample mean of the queue lengths ($\alpha = 2, \beta = 0.2, \lambda = 5, \mu= 1$).}  \label{fig:qsn}
\end{figure}

\begin{remark}
We focus on the model with no service closing time, i.e., $T = \infty$. Of course, for practical purposes we must impose a simulation stopping time, denoted by $T_s$. Intuitively, for a good choice there should be a non-empty time interval that does not have any arrival for any of the simulation iterations. Indeed, if $T_s$ is greater than $t_b$, then the strategic behavior of the customers is not affected by the choice of $T_s$. We set $\mathcal{T} = \{0,\delta,2\delta, \ldots, T_s\}$ for all simulation analysis in the remainder of the paper, that is, $\mathcal{T}$ is the grid we run our simulation on. For the parameter configurations $\alpha = 2, \beta = 0.2, \lambda = 5, \mu= 1$ and $\alpha = 2, \beta = 0.1, \lambda = 5, \mu= 1$ we used $T_s = 20$, since we know from the {numerical evaluation of $F_e$ that $t_b < 20$ for these configurations}. In practice, choosing the simulation time is a trial-and-error process. Note that even if $T_s < t_b$, the estimator of $t_b$ is biased, but the estimation of $\theta$ is not (as long as there is a non-empty time interval with arrivals).
\end{remark}

\subsection{Estimation of the support}
\label{subsec:ES}

\begin{figure}[h!]
	\centering
	\subcaptionbox{$t_a = t_{\tilde{a}} <  t_{\hat{a}_n}, \, t_{\hat{b}_n}< t_{\tilde{b}} = t_b$}%
	{\includegraphics[width=0.5\linewidth]{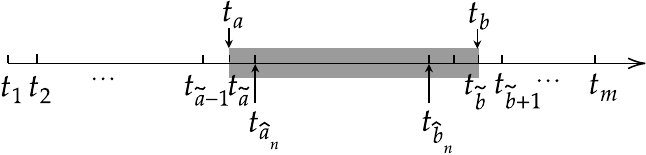}}
	\subcaptionbox{$t_a < t_{\tilde{a}} <  t_{\hat{a}_n},\, t_{\hat{b}_n} < t_{\tilde{b}} < t_b$}%
	{\includegraphics[width=0.5\linewidth]{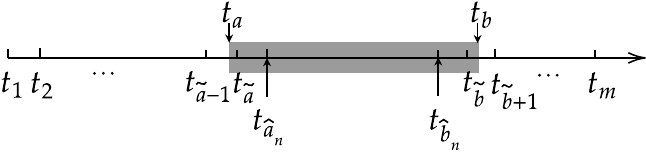}}
	\subcaptionbox{$t_a < t_{\tilde{a}} <  t_{\hat{a}_n},\, t_{\hat{b}_n} = t_{\tilde{b}} < t_b$}%
	{\includegraphics[width=0.5\linewidth]{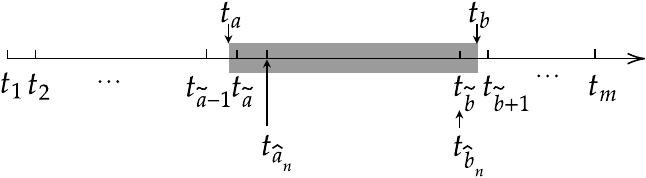}} \\%	\hspace{\fill}
	\subcaptionbox{$t_a < t_{\tilde{a}} = t_{\hat{a}_n},\, t_{\hat{b}_n} = t_{\tilde{b}} < t_b$}%
	{\includegraphics[width=0.5\linewidth]{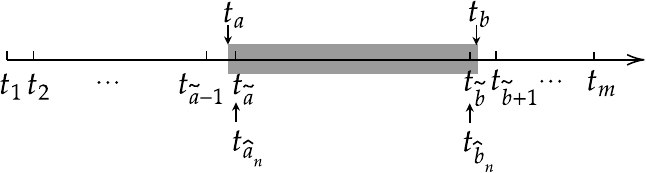}}
	\caption{An illustration of $t_{\hat{a}_n}$, $t_{\hat{b}_n}$, $t_{\tilde{a}}$, and $t_{\tilde{b}}$.} \label{fig:TimeLine}
\end{figure}

To estimate $t_a$ and $t_b$ we must identify the first and last time instants in $\{t_i\}_{i = 1}^m$ such that the queue length increases. By noticing that $[t_a,t_b]$ must include these times, we estimate the support boundaries by
\begin{align}
	\hat{a}_n &= \min_{1 \leq l \leq n} \{\inf_{2 \leq i \leq m} \{i: \Xi^{(l)}_{i} -\Xi^{(l)}_{i-1} \geq 1\} \} \label{eq:boun1},\\
	\hat{b}_n &= \max_{1 \leq l \leq n} \{\sup_{1 \leq i \leq m-1} \{i: \Xi^{(l)}_{i+1} -\Xi^{(l)}_{i} \geq 1\}\}\label{eq:boun2} \,.
\end{align} 
Denote the boundaries of the intersection of the support and the discrete observation grid %$\mathcal{T}$ 
by
\begin{align}
	& \tilde{a} \equiv \inf_{1 \leq i \leq m} \{i: t_i \geq t_a\} \label{eq:tildea},\\
	& \tilde{b} \equiv \sup_{1 \leq i \leq m} \{i: t_i \leq t_b\} \label{eq:tildeb} \, ,
\end{align}
and observe that $[t_{\tilde{a}},t_{\tilde{b}}]\subseteq[t_a,t_b]$. Due to the discrete observation scheme, one cannot accurately estimate $t_a$ and $t_b$. However, we can estimate $t_{\tilde{a}}$ and $t_{\tilde{b}}$ by $t_{\hat{a}_n}$ and $t_{\hat{b}_n}$,  respectively. Figure \ref{fig:TimeLine} describes some examples of $t_{\hat{a}_n}$ and $t_{\hat{b}_n}$ with $t_a, t_b, t_{\tilde{a}},t_{\tilde{b}}$ plotted as well, where the shaded interval is $[t_a, t_b]$. In Figure~\ref{fig:TimeLine} (a), the observation instants $\{t_i\}_{i = 1}^m$ includes both $t_a$ and $t_b$, but this is only possible for carefully chosen parameters such that the boundaries of the discrete grid coincide with those of the support. In this case, $t_{\tilde{a}} = t_a$ and $t_{\tilde{b}} = t_b$. In general $\{t_i\}_{i = 1}^m$ does not include $t_a$ or $t_b$, as in Figure \ref{fig:TimeLine} (b) (c) (d). In any case, it follows from \eqref{eq:boun1} and $\eqref{eq:boun2}$ that
\[
t_a \leq t_{\tilde{a}} \leq  t_{\hat{a}_n}, \qquad t_{\hat{b}_n} \leq t_{\tilde{b}} \leq t_b \,.
\]

Recall that the reason for estimating the support is to be able to select time points inside $[t_a, t_b \,]$ and use the average number of customers at these time points to estimate $\theta$. Even if the estimation for the support is biased, the estimation for $\theta$ is not affected by the bias as long as the chosen pair is inside $[t_a, t_b \,]$, which always holds since $[t_{\hat{a}_n}, t_{\hat{b}_n}] \subset [t_a,t_b]$. 

\subsection{Estimation of $\theta$ using a reference point}
\label{sec:ref_point}

Let $\hat{\mathcal{T}} \equiv \left(\{0\} \cup [t_{\hat{a}_n}, t_{\hat{b}_n}] \right)\cap \{t_i\}_{i=1}^{m}$, where $[t_{\hat{a}_n}, t_{\hat{b}_n}]$ depends on $\{t_i\}_{i=1}^{m}$. The interpretation of $\hat{\mathcal{T}}$ is the observation time slots in the estimated support $\{0\} \cup [t_{\hat{a}_n}, t_{\hat{b}_n}]$. Applying \eqref{eq:theta01} and \eqref{eq:theta02} yields an estimator for $\theta$, 
\begin{equation} \label{eq:est1}
	\hat{\vartheta}_n (t_i,t_j) = 
	\begin{dcases}
		-\frac{\hat{q}_n(t_i) - \hat{q}_n(t_j)}{\mu(t_i-t_j)} &  t_i, t_j > 0, \, t_i\neq t_j, \, t_i, t_j \in \hat{\mathcal{T}} \\
		-\frac{\hat{q}_n(t_i) - \hat{q}_n(0)/2}{\mu t_i}  &   t_i > 0, \, t_j = 0, \, t_i,0 \in \hat{\mathcal{T}}
	\end{dcases}  \,.
\end{equation}
Note that symmetry implies $\hat{\vartheta}_n (t_i,t_j) = \hat{\vartheta}_n (t_j,t_i)$.  {Further observe that for a finite sample there is a positive probability that some of the estimators fall outside of the interval $[0,1]$. In the sequel we verify that the probability of such an event vanishes as the sample size grows. }

To evaluate the performance of this estimator with respect to a choice of a reference point, we consider two parameter configurations and simulated data. The estimators for the boundary indices $\hat{a}_n$ and $\hat{b}_n$ are obtained using Equations \eqref{eq:boun1} and \eqref{eq:boun2}. We then selected a time point $t_r (> 0) \in \hat{\mathcal{T}}$ as a reference point. For each $t \in \hat{\mathcal{T}}/\{t_r\}$, the estimator $\hat{\vartheta}_n(t,t_r)$ is calculated. The two examples of the sequences of estimators $\hat{\vartheta}_n(t,t_r)$ for $t \in \hat{\mathcal{T}}/\{t_r\}$ where $\hat{\mathcal{T}} =\left(\{0\} \cup [t_{\hat{a}_n}, t_{\hat{b}_n}] \right)\cap \mathcal{T}$ and $t_r = 7$, under different parameter settings are depicted in Figure \ref{fig:thetar}.  In Figure \ref{fig:thetar} (a), $t_a = 2.075, t_b = 12.415$, the estimates $t_{\hat{a}_n}$ and $t_{\hat{b}_n}$ are $2.076$ and $12.210$, respectively. In Figure \ref{fig:thetar} (b), $t_a = 1.523, t_b = 16.594$, the estimates $t_{\hat{a}_n}$ and $t_{\hat{b}_n}$ are $1.538$ and $16.324$, respectively. The true $\theta$ value is plotted by the dotted line, and the $*$ at $t = 0$ is $\hat{\vartheta}_n(0,t_r) (=\hat{\vartheta}_n(t_r,0))$. It can be observed from both plots that the closer the two points are, the more biased the estimation is. The explanation is that the dependence between the queue lengths at two time slots increases when the distance between them decreases. In Section \ref{section:Aanalysis}, we will see the estimator variance increases in quadratic rate with the distance. 

\begin{figure}[H]
	\centering
	\subcaptionbox{$\alpha = 2, \beta = 0.2, \lambda = 5, \mu= 1, t_r =7.$}{\includegraphics[width=0.5\linewidth]{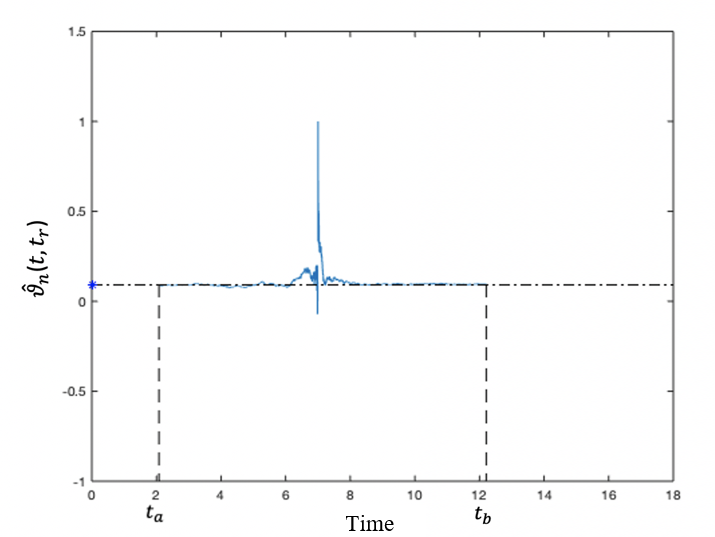}}%	
	\subcaptionbox{$\alpha = 2, \beta = 0.1, \lambda = 5, \mu= 1, t_r = 7.$}{\includegraphics[width=0.49\linewidth]{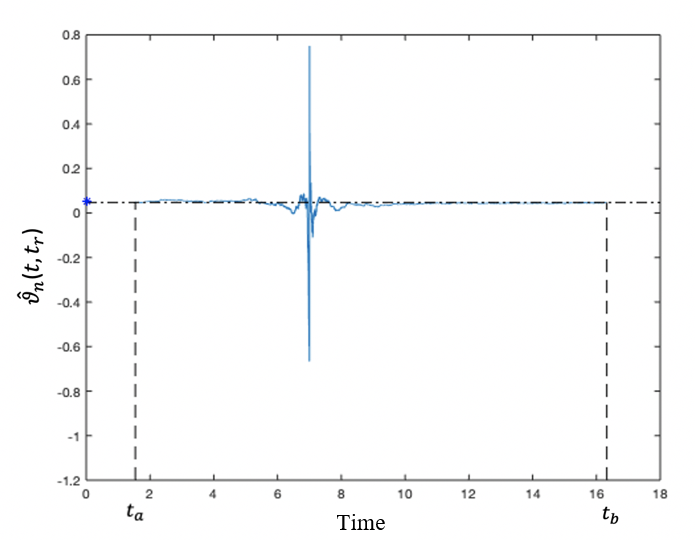}}%	\hspace{\fill}
	\caption{Estimates of $\theta$ using a reference instant.}  \label{fig:thetar}
\end{figure}

\begin{remark} It follows from Equation \eqref{eq:wof1} and $f(t_b) = 0$ that $\theta$ can also be expressed as
\begin{equation} \label{eq:est2}
	\theta = 1-P_0(t_b) \,.
\end{equation}
This means if we can estimate $t_b$ by  $t_{\hat{b}_n}$, we can estimate $P_0(t_b)$ by counting how many empty queues there are at $t_{\hat{b}_n}$ and dividing it by the total sample size $n$, thus yielding an estimator for $\theta$. However, this requires an accurate estimate of $t_b$ first, which requires both a large $m$ such that $\{t_i\}_{i=1}^m$ includes a point that is very close to it, and a large sample size $n$. Moreover, for any $m<\infty$ the estimator for $t_b$ is biased so it may not be possible to obtain a consistent estimator. Also, in the model with service closing time, which we discuss in Section \ref{sec:ext}, there does not necessarily exist a time $t$ that satisfies $f(t) = 0$, thus there is no expression equivalent to \eqref{eq:est2}.
\end{remark}

% if the observation points $\{t_i\}_{i=1}^{m} =\mathcal{T}$,  then $ \hat{\mathcal{T}}= \{0, t_{\hat{a}_n}, t_{\hat{a}_n}+\delta, \ldots t_{\hat{b}_n} \}$. 

\subsection{The mean estimator}
\label{subsec:E}

\begin{figure}[h!]
	\centering
	\subcaptionbox{$\alpha = 2, \beta = 0.2, \lambda = 5, \mu= 1.$}{\includegraphics[width=0.495\linewidth]{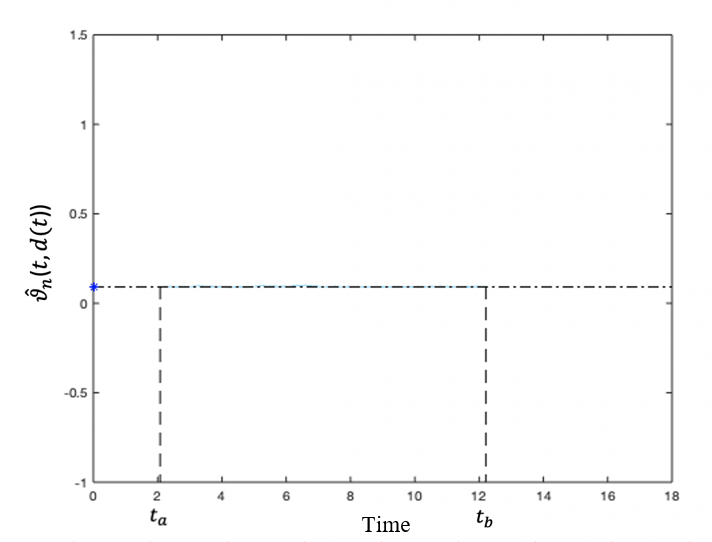}} 
	\subcaptionbox{$\alpha = 2, \beta = 0.1, \lambda = 5, \mu= 1.$}{\includegraphics[width=0.495\linewidth]{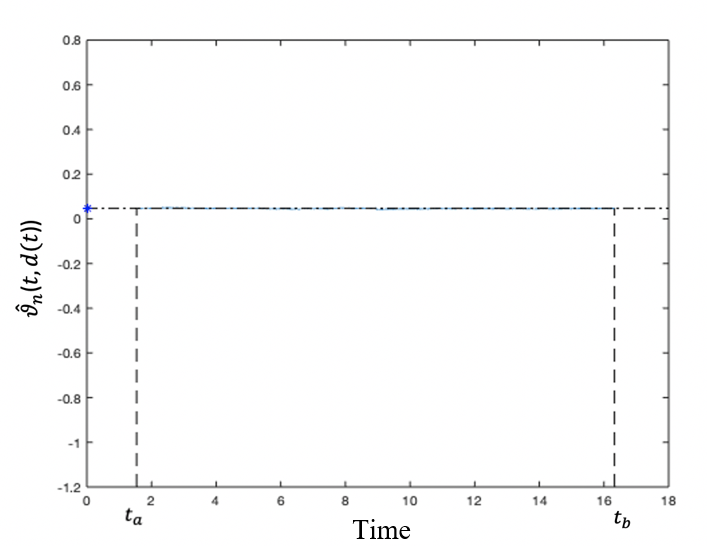}} 
	\caption{Estimates of $\theta$ using the farthest instant.} \label{fig:thetad}
\end{figure}

With the aforementioned reasoning in mind, it is better to choose two observation points that are far away from each other. For any $t \in \hat{\mathcal{T}}$, let $d(t) = \arg\max_{t_i \in \hat{\mathcal{T}}} |t_i-t|$ and we compute $\hat{\vartheta}_n(t, d(t))$ for $t \in \hat{\mathcal{T}}$. The results for the examples considered in the prequel are depicted in Figure \ref{fig:thetad}. The true $\theta$ value was plotted by the dotted line, and the $*$ at $t = 0$ is $\hat{\vartheta}_n(0,d(0))$. It is observed from the plots that $\hat{\vartheta}_n(t, d(t))$ is generally better than $\hat{\vartheta}_n(t, t_r)$ for a fixed reference point $t_r$. This is specially noticeable for $t$ near $t_r$. Denote the cardinality of $\hat{\mathcal{T}}$ by $|\hat{\mathcal{T}}|$. When $|\hat{\mathcal{T}}| > 2$, we propose an estimator
\begin{equation}\label{eq:theta2}
	\hat{\theta}_n \equiv \frac{1}{| \hat{\mathcal{T}} |} \, \sum_{t \in \hat{\mathcal{T}}} \hat{\vartheta}_n(t, d(t)) \,. 
\end{equation}
by taking the mean of the estimates calculated by every pair in $\hat{\mathcal{T}}$.

\begin{figure}
	\centering
	\includegraphics[width=0.9\linewidth]{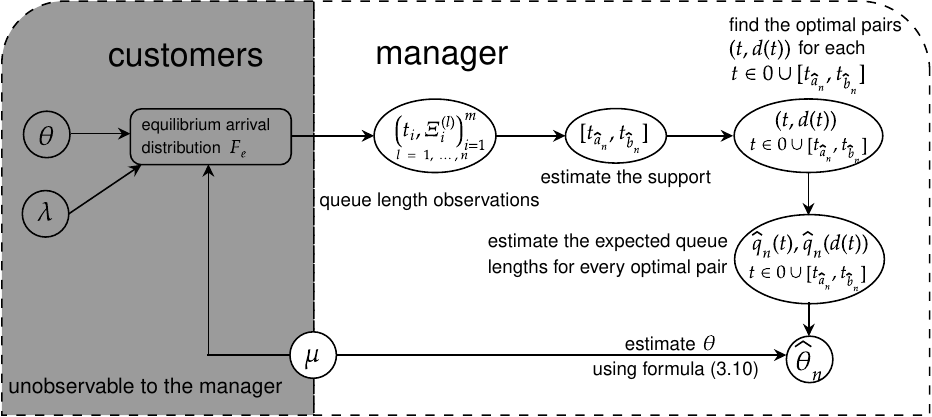}
	\caption{An overview of the observation and estimation process.} \label{fig:estimation}
\end{figure}

An overview of the observation and the estimation process is depicted in Figure \ref{fig:estimation}. The manager uses her observations and the known value of $\mu$ to estimate $\theta$. First the expected queue lengths are estimated from independent realizations of the queue. Next the interval of continuous arrivals $[t_a,t_b]$ is estimated by considering the time points closest to the boundaries on the discrete observation grid $\mathcal{T}$. Note that although the observations from different days are independent, the observations at different times for a single day are not, hence there is a need to carefully select the pairing of observation instants in the construction of the sequence of estimators $\hat{\vartheta}_n (t_i,t_j)$ for $\theta$. To this end a heuristic rule of choosing the farthest away observation (in terms of time) is applied to every sampling time instant. The intuitive explanation for this rule is that the correlation between queue lengths at different sampling times decreases the further apart they are chosen. This intuition is verified by the examples presented above and simulation experiments in Section~\ref{section:NumericalExamples}. Finally, the estimator $\theta$ in \eqref{eq:theta2} is given by taking the mean of the sequence of estimators obtained by the optimal pairing rule.

%%%
\section{Asymptotic  analysis} \label{section:Aanalysis}
We first state the main results of this section with the proofs detailed in the following subsections. In the following, $\mathcal{N} \left(\bm{\mu}, \Sigma\right)$ denotes a normally distributed random variable with mean vector $\bm{\mu}$ and covariance matrix $\Sigma$. An estimator is said to be strongly consistent if as the number of observation days increases, the resulting sequence of estimates converges almost surely to the true value.  Theorem \ref{thm:consistency} proves the strong consistency of our estimator $\hat{\theta}_n$. We establish the asymptotic normality of our estimator $\hat{\theta}_n$, and prove in Theorem \ref{theorem:ad} that as $n \rightarrow \infty$, the estimation error scaled by $\sqrt{n}$ converges to a zero-mean normal random variable, whose variance can be numerically approximated. We explain in detail how the variance is calculated in Section \ref{subsec:VarianceC}. The proofs of Theorem \ref{thm:consistency} and \ref{theorem:ad} are provided in Section \ref{subsec:StrongConsistency} and Section \ref{subse:AsymptoticD}, respectively. 

We assume the sampling points do not include time $0$ in this section. The analysis is similar if $0$ is included.
 
\begin{theorem}
	\label{thm:consistency}
	As $n\to\infty$, $\hat{\theta}_n  \rightarrow_{a.s.}\theta$.
\end{theorem}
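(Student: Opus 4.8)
The plan is to establish strong consistency by reducing the convergence of $\hat\theta_n$ to the convergence of its constituent pieces: the sample-mean queue length estimators $\hat q_n(t_i)$ and the support-boundary estimators $\hat a_n, \hat b_n$. The overall structure is: (i) show the queue length estimators converge almost surely to the true expected queue lengths; (ii) show the estimated support $[t_{\hat a_n}, t_{\hat b_n}]$ stabilizes almost surely to some deterministic interval $[t_{a^*}, t_{b^*}]$ contained in $[t_a, t_b]$; (iii) combine these via the continuous mapping theorem to conclude $\hat\theta_n \to \theta$.

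First I would invoke the strong law of large numbers: for each fixed sampling instant $t_i$, the random variables $\Xi_i^{(1)}, \dots, \Xi_i^{(n)}$ are i.i.d. copies of $Q(t_i)$ with finite mean (the queue length is dominated by a Poisson($\lambda$) number of arrivals, hence integrable), so $\hat q_n(t_i) \xrightarrow{a.s.} q(t_i) = \EX[Q(t_i)]$. Since there are only finitely many sampling instants $m$, the vector $(\hat q_n(t_1), \dots, \hat q_n(t_m))$ converges almost surely to $(q(t_1), \dots, q(t_m))$.

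Next I would handle the support estimators, which is where the only real subtlety lies. The indices $\hat a_n$ and $\hat b_n$ defined in \eqref{eq:boun1}--\eqref{eq:boun2} are non-increasing and non-decreasing in $n$, respectively (taking a min/max over more days can only move them outward), and they are bounded by the fixed grid indices $\tilde a$ and $\tilde b$ from the other side; more precisely $t_a \le t_{\tilde a} \le t_{\hat a_n}$ and $t_{\hat b_n} \le t_{\tilde b} \le t_b$ as noted in the text, and the sequences are monotone on a finite grid. Hence they are eventually constant almost surely: there exist (random a priori, but almost surely attained) limiting indices $a^* = \lim_n \hat a_n$ and $b^* = \lim_n \hat b_n$ with $t_a \le t_{a^*} \le t_{b^*} \le t_b$. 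Consequently $\hat{\mathcal T}$ is eventually equal to a fixed finite set $\mathcal T^* \subseteq [t_a, t_b] \cap \{t_i\}$ of cardinality at least $3$ (by the assumption $m > 2$ and that the grid contains interior points of the support), and the pairing map $t \mapsto d(t)$ is eventually fixed as well. The key point to make carefully is that this limiting set lies entirely inside $[t_a, t_b]$, so that the identity $\theta = -(q(s) - q(t))/(\mu(s-t))$ holds exactly for every pair $s, t \in \mathcal T^*$; this is exactly \eqref{eq:theta02}, valid because the expected-cost-constant condition \eqref{eq:theta0} applies throughout $[t_a, t_b]$.

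Finally I would assemble the pieces. On the almost sure event where $\hat{\mathcal T}$ and $d(\cdot)$ have stabilized to $\mathcal T^*$ and $d^*(\cdot)$, the estimator becomes $\hat\theta_n = \frac{1}{|\mathcal T^*|}\sum_{t \in \mathcal T^*} -\frac{\hat q_n(t) - \hat q_n(d^*(t))}{\mu(t - d^*(t))}$, which is a fixed continuous function (no vanishing denominators, since $t \ne d^*(t)$ for all $t$ when $|\mathcal T^*| \ge 2$) of the vector $(\hat q_n(t_1), \dots, \hat q_n(t_m))$. By the continuous mapping theorem and step (i), this converges almost surely to $\frac{1}{|\mathcal T^*|}\sum_{t \in \mathcal T^*} -\frac{q(t) - q(d^*(t))}{\mu(t - d^*(t))}$, and each summand equals $\theta$ by \eqref{eq:theta02}, so the average equals $\theta$. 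I expect the main obstacle to be the rigorous treatment of step (ii): one must argue that the monotone-in-$n$, finite-range sequences $\hat a_n, \hat b_n$ stabilize almost surely (straightforward since a bounded monotone integer sequence is eventually constant, pathwise), and — more importantly — that a.s. $|\hat{\mathcal T}| \ge 3$ eventually, which requires that with probability one the queue length increases at enough grid points; this follows because the equilibrium arrival density is strictly positive on $[t_a, t_b]$ so there is positive probability of an arrival in any grid cell intersecting the support, and over infinitely many i.i.d. days such increments are observed almost surely. Once that is in hand, the rest is a routine application of the SLLN and the continuous mapping theorem.
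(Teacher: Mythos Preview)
Your proposal is correct and follows the same core route as the paper: apply the strong law of large numbers to each $\hat q_n(t_i)$ (finite moments since $Q(t)$ is dominated by a Poisson$(\lambda)$ variable), then use the continuous mapping theorem on the linear map that produces $\hat\theta_n$ from the queue-length estimates, and finally invoke the equilibrium identity \eqref{eq:theta02} to see each summand equals $\theta$.

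The one genuine difference is your step (ii). The paper's proof of Theorem~\ref{thm:consistency} is terse and does not explicitly treat the randomness of $\hat{\mathcal T}$ at all; it simply asserts that $\hat\theta_n$ is a linear function of the $\hat q_n(t)$ and applies the continuous mapping theorem directly. The stabilization of the support estimators is proved separately as Proposition~\ref{prop:support}, but the paper invokes it only for the asymptotic normality result (Theorem~\ref{theorem:ad}), not for consistency. Your argument---that $\hat a_n$ is non-increasing, $\hat b_n$ is non-decreasing, both take values in a finite grid bounded by $\tilde a,\tilde b$, hence are eventually constant pathwise---is a cleaner and more self-contained way to close the gap that the paper leaves implicit. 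It also makes explicit why the limiting pairs lie in $[t_a,t_b]$, which is exactly what is needed for \eqref{eq:theta02} to apply. So your treatment is slightly more rigorous on this point, at no extra cost.
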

\begin{theorem} 
	\label{theorem:ad}
	For $t_i \in \hat{\mathcal{T}}$, let $k_i \equiv |\hat{\mathcal{T}}| \left(t_i - d(t_i) \right) \mu$, $g_i \equiv \sum_{j \neq i}\frac{1}{k_j} \mathbbm{1}_{\{d(t_j) = t_i\}}-\frac{1}{k_i}$. Let $v(t) \equiv \mathrm{Var}_{F_e}[Q(t)]$ and $\rho(s,t) \equiv \mathrm{Cov}_{F_e}[Q(s),Q(t)]$ be the variance and covariance, for any $s,t\in[t_a,t_b]$. As $n \rightarrow \infty$,
	\[
	\sqrt{n} \left( \hat{\theta}_n - \theta \right) \,  \xrightarrow{d} \mathcal{N}\left( 0 \, , \,  \sum_{i=\tilde{a}}^{\tilde{b}} g_i^2v(t_i)+2\sum_{i=\tilde{a}}^{\tilde{b}} \sum_{j> i}^{\tilde{b}} g_i g_j\rho(t_i,t_j) \right) \,.
	\]
\end{theorem}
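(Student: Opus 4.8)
The plan is to strip away the randomness coming from the estimated support, reduce $\hat\theta_n$ to a fixed linear form in the empirical queue–length means, and then invoke a multivariate CLT followed by a linear map.

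\emph{Step 1: freezing the support.} On the grid $\{t_i\}_{i=1}^m$ the indices $\hat a_n,\hat b_n$ are integer valued and monotone in $n$ (the first is non-increasing, the second non-decreasing), and they obey the deterministic sandwich $\tilde a\le\hat a_n$, $\hat b_n\le\tilde b$ from \eqref{eq:boun1}–\eqref{eq:boun2} together with the fact that arrivals occur only in $\{0\}\cup[t_a,t_b]$. Combining this with the Borel–Cantelli argument already used for Theorem~\ref{thm:consistency} — each day the queue length increases across the slots $\tilde a$ and $\tilde b$ with a fixed positive probability — gives $\hat a_n=\tilde a$ and $\hat b_n=\tilde b$ for all $n$ large enough, almost surely; in particular $\mathbb P(\hat{\mathcal T}\neq\tilde{\mathcal T})\to 0$, where $\tilde{\mathcal T}\equiv\{t_i:\tilde a\le i\le\tilde b\}$. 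Let $\tilde\theta_n\equiv\sum_{i=\tilde a}^{\tilde b}g_i\,\hat q_n(t_i)$ be the estimator one would form knowing the support slots to be $\tilde{\mathcal T}$. Since $\{\hat\theta_n\neq\tilde\theta_n\}\subseteq\{\hat{\mathcal T}\neq\tilde{\mathcal T}\}$, one has $\mathbb P(\sqrt n\,|\hat\theta_n-\tilde\theta_n|>\varepsilon)\le\mathbb P(\hat{\mathcal T}\neq\tilde{\mathcal T})\to 0$, so $\sqrt n(\hat\theta_n-\tilde\theta_n)\xrightarrow{p}0$ and, by Slutsky, it suffices to prove the stated limit for $\tilde\theta_n$.

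\emph{Step 2: rewriting $\tilde\theta_n$ as a fixed linear form.} Substituting \eqref{eq:est1} into \eqref{eq:theta2} with the farthest-point pairing $d(\cdot)$ and collecting the coefficient of each $\hat q_n(t_j)$ shows that $\tilde\theta_n=\sum_{i=\tilde a}^{\tilde b}g_i\,\hat q_n(t_i)$ with exactly the $g_i$ of the statement; a quick check gives $\sum_i g_i=0$. Applying the equilibrium identity \eqref{eq:theta02} term by term to the deterministic means $q(t_i)=\mathbb E_{F_e}[Q(t_i)]$ yields $\sum_{i=\tilde a}^{\tilde b}g_i\,q(t_i)=\theta$, so $\tilde\theta_n-\theta=\sum_{i=\tilde a}^{\tilde b}g_i\,(\hat q_n(t_i)-q(t_i))$ (in particular $\tilde\theta_n$ is unbiased).

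\emph{Step 3: multivariate CLT and the linear map.} Set $Y^{(l)}\equiv(\Xi^{(l)}_{\tilde a},\dots,\Xi^{(l)}_{\tilde b})^{\top}$; these are i.i.d.\ over $l$ with mean $(q(t_{\tilde a}),\dots,q(t_{\tilde b}))^{\top}$ and covariance matrix $\Sigma$ given by $\Sigma_{ij}=\rho(t_i,t_j)$ and $\Sigma_{ii}=v(t_i)$, all finite because each $Q(t_i)$ is dominated by the daily number of arrivals, which is Poisson$(\lambda)$ and thus has finite moments of every order. The Lindeberg–Lévy multivariate CLT gives $n^{-1/2}\sum_{l=1}^n(Y^{(l)}-\mathbb E Y^{(1)})\xrightarrow{d}\mathcal N(\bm 0,\Sigma)$, and applying the linear functional $g=(g_{\tilde a},\dots,g_{\tilde b})^{\top}$ (continuous mapping theorem) gives $\sqrt n(\tilde\theta_n-\theta)=g^{\top}n^{-1/2}\sum_{l=1}^n(Y^{(l)}-\mathbb E Y^{(1)})\xrightarrow{d}\mathcal N(0,g^{\top}\Sigma g)$, where $g^{\top}\Sigma g=\sum_{i=\tilde a}^{\tilde b}g_i^2 v(t_i)+2\sum_{i=\tilde a}^{\tilde b}\sum_{j>i}^{\tilde b}g_i g_j\rho(t_i,t_j)$. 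Together with Step~1 this is the claim.

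\emph{Main obstacle.} The only genuinely delicate point is Step~1: showing that the data-dependent index set $\hat{\mathcal T}$ equals the deterministic $\tilde{\mathcal T}$ with probability tending to one (in fact eventually a.s.), so that the $\sqrt n$-scaling does not interact with the support-estimation error. This rests entirely on the two ingredients listed above — the deterministic sandwich and the Borel–Cantelli convergence of $\hat a_n,\hat b_n$ — both of which are available from the proof of Theorem~\ref{thm:consistency}; once Step~1 is in place, Steps~2–3 are the routine "CLT plus linear map'' computation.
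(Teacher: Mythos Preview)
Your proposal is correct and follows essentially the same approach as the paper: freeze the data-dependent index set $\hat{\mathcal T}$ to the deterministic $\tilde{\mathcal T}$ (the paper does this via Proposition~\ref{prop:support}, not via Theorem~\ref{thm:consistency} as you write, though your monotonicity-plus-Borel--Cantelli sketch is equivalent), then apply the multivariate CLT to the i.i.d.\ daily queue-length vectors and push through the fixed linear map $g$. If anything, your Step~1 is more careful than the paper's own argument in making explicit, via Slutsky, that the support-estimation error does not interact with the $\sqrt n$ scaling.
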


\subsection{Strong consistency} \label{subsec:StrongConsistency}

The proof of Theorem \ref{thm:consistency} requires finite first and second moments of the queue length $Q(t)$. The total number of customers in the system is bounded by a Poisson distributed random variables with mean $\lambda$, so as long as $0< \lambda<\infty$, $v(t)<\infty$ and $|\rho(s,t)|<\infty$. 

%Lemma \ref{lemma:bounded_variance} which proves that the queue length $Q(t)$ has finite first and second moments, is required for the proof of Theorem \ref{thm:consistency}.
%\begin{lemma}\label{lemma:bounded_variance}
%For any $s,t\in[t_a,t_b]$ and , $v(t)<\infty$ and $|\rho(s,t)|<\infty$ .
%\end{lemma}
%\begin{proof}
%We show that both the first and second moments of $Q(t), t \in supp(F)$ are bounded. Let $A_F(t)$ and $D_F(t)$ denote the number customers that have already arrived and the number that have been served by time $t$, then $Q(t)$, which is the number of customers presently in the system at time $t$, can be written as
%\[
%Q(t) = A(t) - D(t) \,.
%\]
%In every sample path, $Q(t) \leq A(t)$. Indeed, the number of customers in the queue at time $t$ is less than or equal to the number of customers that have ever arrived. Also, $A(t)$ is always not greater than $N$, as the number of customers who have arrived before time $t$ is not greater than the total number of customers. Hence, $Q(t)$ is stochastically dominated by $N$. It follows from \citet[Theorem 1.A.8.]{SS07} that $\int u(x) dF_{Q(t)}(x) \leq \int u(x) dF_{N}(x)$ for every increasing function $u(\cdot)$. Thus for any $t \in supp(F)$,
%\begin{align}
%	& \EX[Q(t)] = \int x dF_{Q(t)}(x) \leq  \int xdF_N(x) = \lambda \\
%	& \EX[Q^2(t)] = \int x^2 dF_{Q(t)}(x) \leq  \int x^2dF_N(x) = \lambda^2 +\lambda \,.
%\end{align}
%For the covariance, the Cauchy–Schwarz inequality implies that
%\[
%	\rho(s,t) \leq \sqrt{v(s) \, v(t)} < \infty \,.
%\]
%That is, finite variances at $s$ and $t$ imply a finite covariance.
%\end{proof}

{\it Proof of Theorem \ref{thm:consistency}.}
The first and second moments of $Q(t)$ are bounded, it follows from Kolmogorov strong law of large numbers \citep[p251]{L77} that
\begin{equation*}
	\hat{q}_n(t) \rightarrow_{a.s.} q(t) \,.
\end{equation*}
%\begin{equation}
%	\mathbb{P} \left(\lim\limits_{n \rightarrow \infty} \hat{q}_n(t) =  \EX(Q(t)) \right)  = 1 \,.
%\end{equation}
Also, it follows from Equation \eqref{eq:est1} and \eqref{eq:theta2} that both $\hat{\vartheta}_n(t,d(t))$ and $\hat{\theta}_n$ are linear functions of $\hat{q}_n(t)$, thus the continuous mapping theorem in \citet[Theorem 2.3]{VV00} implies
\begin{equation*}
	\hat{\vartheta}(t, d(t)) \rightarrow_{a.s.} -\frac{ q(t)- q\left(d(t)\right)}{(t-d(t))\mu} = \theta \ , \qquad \hat{\theta}_n \rightarrow_{a.s.} \frac{1}{m} \sum_{i = 1}^{m} \theta = \theta \,.
\end{equation*}
$\hfill \square$

We have explained in Section \ref{subsec:ES} that the reason to estimate the support $[t_a,t_b]$ is to be able to choose at least two points inside it for the estimation, and our method assures that $[t_{\hat{a}_n}, t_{\hat{b}_n}] \subseteq [t_a,t_b]$. Moreover, with a discrete observation scheme the best one can hope for is to estimate the points on the grid that are closest to the actual boundaries of the support, i.e., $t_{\tilde{a}}$ and $t_{\tilde{b}}$ defined in Equations \eqref{eq:tildea} and \eqref{eq:tildeb}, respectively. Although it is not essential to have accurate estimates of $t_a$ and $t_b$, we prove in the following proposition that $t_{\hat{a}_n}$ and $t_{\hat{b}_n}$ converges to $t_{\tilde{a}}$ and $t_{\tilde{b}}$, respectively.  Proposition~\ref{prop:support} is used in establishing the asymptotic distribution of the errors in the proof of Theorem~\ref{theorem:ad}.
\begin{proposition}\label{prop:support}
	%If $t_{\hat{a}_n}$ and $t_{\hat{b}_n}$ exist and $t_{\hat{b}_n}$,
	If there are at least two observation points from $\{t_i\}_{i=1}^{m}$ that are inside the support of the arrival distribution, then as $n\to\infty$, $t_{\hat{a}_n}  \rightarrow_{a.s.} t_{\tilde{a}}, \, t_{\hat{b}_n}  \rightarrow_{a.s.} t_{\tilde{b}}$\,. % > t_{\tilde{a}}
\end{proposition}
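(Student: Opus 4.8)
The statement claims almost-sure convergence of the estimated grid-boundary indices $t_{\hat a_n}$ and $t_{\hat b_n}$ to the true grid boundaries $t_{\tilde a}$ and $t_{\tilde b}$ of the support intersected with the observation grid. The plan is to argue this via a Borel--Cantelli-type argument, exploiting the monotonicity in $n$ of the quantities defined in \eqref{eq:boun1}--\eqref{eq:boun2}. First I would observe that $t_{\hat a_n}$ is non-increasing in $n$ (taking the minimum over more days can only decrease it) and $t_{\hat b_n}$ is non-decreasing in $n$; combined with the already-established inclusion $t_a \le t_{\tilde a} \le t_{\hat a_n}$ and $t_{\hat b_n} \le t_{\tilde b} \le t_b$, both sequences are monotone and bounded, hence converge almost surely to some limits $t_{\hat a_\infty} \ge t_{\tilde a}$ and $t_{\hat b_\infty} \le t_{\tilde b}$. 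It then remains to rule out a strict gap.

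The core of the argument is a lower bound on the probability that, on a single day, the increment $\Xi^{(l)}_{i} - \Xi^{(l)}_{i-1} \ge 1$ is realized at the grid index $\tilde a$ (the first grid point inside $[t_a,t_b]$), and likewise at $\tilde b$. Since $t_{\tilde a}$ lies strictly inside the open support where $f_e > 0$, there is positive probability $p_{\tilde a} > 0$ that at least one customer arrives in the time interval $(t_{\tilde a - 1}, t_{\tilde a}]$ and is counted in the queue at $t_{\tilde a}$ without an intervening service completion emptying that contribution — more carefully, $\mathbb{P}(Q(t_{\tilde a}) \ge Q(t_{\tilde a - 1}) + 1) > 0$ because on the event that an arrival occurs in that subinterval and no service completion occurs, the queue strictly increases. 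Because days are i.i.d., the event ``the index $\tilde a$ is selected by the inner infimum in \eqref{eq:boun1} on day $l$'' — or an index no larger — occurs with probability bounded below by a positive constant, so by independence it occurs for some $l \le n$ with probability tending to $1$, and in fact $\sum_n \mathbb{P}(\text{not realized by day } n) < \infty$. Borel--Cantelli then gives $t_{\hat a_n} = t_{\tilde a}$ for all $n$ large, almost surely; the same reasoning at $\tilde b$ gives $t_{\hat b_n} = t_{\tilde b}$ eventually. The final inequality $t_{\tilde b} > t_{\tilde a}$ is exactly the hypothesis that at least two observation points fall inside the support, which forces $\tilde a < \tilde b$.

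One technical point I would need to handle with care is the definition of ``the queue length increases'': the increment condition in \eqref{eq:boun1} compares $\Xi^{(l)}_i$ to $\Xi^{(l)}_{i-1}$ only at consecutive \emph{grid} points, not consecutive arrival epochs, so I must make sure that the positive-probability event I construct genuinely produces $\Xi^{(l)}_{\tilde a} - \Xi^{(l)}_{\tilde a - 1} \ge 1$ and that the infimum over $i$ in \eqref{eq:boun1} does not get ``stolen'' by an earlier index — but that is fine, because the estimator takes $\min$ over days of these infima, and we only need the infimum on \emph{some} day to equal $\tilde a$ (not to be $\le \tilde a$); since $t_a \le t_{\tilde a}$ and no grid point strictly below $\tilde a$ lies in $[t_a,t_b]$, an increment at a grid index $< \tilde a$ is impossible because the equilibrium arrival rate is zero there, so on the day the $\tilde a$-event occurs the infimum is exactly $\tilde a$. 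The main obstacle is therefore not the probabilistic machinery (which is a routine Borel--Cantelli argument once monotonicity is in hand) but rather pinning down precisely the positive-probability event at the boundary grid points and verifying that no spurious increments can occur at earlier grid indices — i.e., translating ``$f_e > 0$ on $(t_a, t_b)$ and $f_e = 0$ on $(0,t_a)$'' into the required statements about the discrete-grid increments of the queue-length sample paths.
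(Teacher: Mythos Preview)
Your plan is essentially the paper's own argument: monotonicity of $t_{\hat a_n}$ (nonincreasing) and $t_{\hat b_n}$ (nondecreasing) in $n$, a single-day event of strictly positive probability (``at least one arrival and no service completion on the relevant grid subinterval''), and independence across days. The paper packages the last step as ``convergence in probability of a monotone sequence implies almost sure convergence'' (citing Kallenberg) rather than Borel--Cantelli; the two are equivalent here since the tail probability is geometric in $n$.

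One point of divergence is worth flagging. You look for an increment at index $\tilde a$, i.e.\ over $(t_{\tilde a-1}, t_{\tilde a}]$, which is the interval that matches the definition of $\hat a_n$ in \eqref{eq:boun1}; the paper instead looks at $[t_{\tilde a}, t_{\tilde a+1}]$, which is guaranteed by the two-observation hypothesis to lie inside $[t_a,t_b]$ and hence to carry positive $F_e$-mass. Your interval need not: you assert that $t_{\tilde a}$ lies \emph{strictly} inside the open support, but by definition $t_{\tilde a}\ge t_a$ and equality is possible (the paper itself depicts this edge case in Figure~\ref{fig:TimeLine}(a)). When $t_{\tilde a}=t_a$ one has $F_e(t_{\tilde a})-F_e(t_{\tilde a-1})=0$, so your positive-probability event collapses. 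This is a non-generic boundary alignment that neither proof handles cleanly; to make your argument airtight you should either add the explicit genericity assumption $t_{\tilde a}>t_a$, $t_{\tilde b}<t_b$, or note that in the degenerate case the limit can be off by one grid step.
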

\begin{proof}
	We label the observation times $\{t_i\}_{i=1}^{m}$ in a way such that $t_{i+1} > t_i$ for $i = 1, \ldots, m-1$. For $t \geq t_a$, $F_e(t)$ is increasing, so $F_e(t_{\tilde{a}+1}) > F_e(t_{\tilde{a}})$. By definition, $t_{\hat{a}_n}$ is greater than or equal to $t_{\tilde{a}}$. We will show that the probability that there are no days with an increase in queue length between the observation instants $t_{\tilde{a}}$ and $ t_{\tilde{a}+1}$ goes to zero. The queue length increases if there are more arrivals than departures in this period. If such an increase is observed on day $n_1$, then $t_{\hat{a}_n}=t_{\tilde{a}}$ for all $n\geq n_1$.
	Given that there are arrivals after time $0$ in equilibrium, the probability that there is at least one arrival but no departures during $[t_{\tilde{a}}, t_{\tilde{a}+1}]$ is at least
	\begin{equation*}
	    \frac{F_e(t_{\tilde{a}+1}) - F_e(t_{\tilde{a}})}{1-p_e} \, e^{-\mu (t_{\tilde{a}+1} - t_{\tilde{a}})} \,.
	\end{equation*}
	 As the observations of different days are independent, we have that 
	\begin{equation*}
		\mathbb{P} \left(t_{\hat{a}_n} > t_{\tilde{a}} \right) \leq \left(1 - \frac{F_e(t_{\tilde{a}+1}) - F_e(t_{\tilde{a}})}{1-p_e} \, e^{-\mu (t_{\tilde{a}+1} - t_{\tilde{a}})}\right)^n \,.
	\end{equation*}
	% \textcolor{red}{Let $C$ be the event that the number of departures is greater than the number of arrivals in $(t_{\tilde{a}}, t_{\tilde{a}+1})$, then
	%\begin{equation*}
	%	\mathbb{P} \left(t_{\hat{a}_n} > t_{\tilde{a}} \right) \leq %\left(1 - \mathbb{P}\left( C \right)\right)^n \,.
	%\end{equation*}}
	Therefore,
	\begin{equation*}
		\lim\limits_{n \rightarrow \infty} \mathbb{P} \left( | t_{\hat{a}_n} - t_{\tilde{a}} |> 0 \right) = \lim\limits_{n \rightarrow \infty}  \mathbb{P} \left( t_{\hat{a}_n} - t_{\tilde{a}} > 0 \right) =0 \,.
	\end{equation*}
	Similarly, we have 
	\begin{equation*}
		\mathbb{P}  \left(t_{\hat{b}_n} < t_{\tilde{b}} \right)  \leq \left(1-\frac{F_e(t_{\tilde{b}}) - F_e(t_{\tilde{b}-1})}{1-p_e} \, e^{-\mu (t_{\tilde{b}} - t_{\tilde{b}-1})} \right)^n \,,
	\end{equation*}
	and
	\begin{equation*}
		\lim\limits_{n \rightarrow \infty} \mathbb{P} \left( |t_{\hat{b}_n} - t_{\tilde{b}} |> 0 \right) = \lim\limits_{n \rightarrow \infty}  \mathbb{P} \left(t_{\tilde{b}} - t_{\hat{b}_n} > 0 \right) =0 \,.
	\end{equation*}
	% \textcolor{red}{Let $D$ be the event that the number of departures is greater than the number of arrivals in $(t_{\tilde{b}-1}, t_{\tilde{b}})$, then
	% \begin{equation*}
	% 	\mathbb{P}  \left(t_{\hat{b}_n} < t_{\tilde{b}} \right)  % \leq \left(1 - \mathbb{P}\left( D \right)\right)^n \,.
	% \end{equation*}}
	Moreover, convergence in probability of a monotone sequence implies the convergence with probability 1 \citep[Lemma 3.2]{K97}, so we conclude
	\begin{equation*}
		t_{\hat{a}_n} \rightarrow_{a.s.} t_{\tilde{a}} \qquad t_{\hat{b}_n} \rightarrow_{a.s.} t_{\tilde{b}} \,.
	\end{equation*}
	%	\begin{align}
		%	&\mathbb{P} \left(t_{\hat{a}_n} > t_{\tilde{a}} \right) \leq \left(1 - \frac{F(t_{\tilde{a}+1}) - F(t_{\tilde{a}})}{1-p}\right)^n \\
		%	& \mathbb{P}  \left(t_{\hat{b}_n} < t_{\tilde{b}} \right)  \leq \left(1-\frac{F(t_{\tilde{b}}) - F(t_{\tilde{b}-1})}{1-p}\right)^n \,,
		%	\end{align}
	%	where $F(t_{\tilde{a}+1}) - F(t_{\tilde{a}}) > 0$ and $F(t_{\tilde{b}}) - F(t_{\tilde{b}-1}) > 0$ by the continuity of $F$. Also, observe that $t_{\hat{a}_n}$ is non-increasing and $t_{\hat{b}_n}$ is non-decreasing, for any $n \geq 1$. 
	%	\begin{align}
		%	& \lim\limits_{n \rightarrow \infty} \mathbb{P} \left( | t_{\hat{a}_n} - t_{\tilde{a}} |> 0 \right) = \lim\limits_{n \rightarrow \infty}  \mathbb{P} \left( t_{\hat{a}_n} - t_{\tilde{a}} > 0 \right) =0 \\
		%	& \lim\limits_{n \rightarrow \infty} \mathbb{P} \left( |t_{\hat{b}_n} - t_{\tilde{b}} |> 0 \right) = \lim\limits_{n \rightarrow \infty}  \mathbb{P} \left(t_{\tilde{b}} - t_{\hat{b}_n} > 0 \right) =0 \,.
		%	\end{align}
	%	Hence, $t_{\hat{a}_n} \xrightarrow{p} t_{\tilde{a}}, t_{\hat{b}_n} \xrightarrow{p} t_{\tilde{b}}$. 
\end{proof}

\subsection{Asymptotic distribution of the estimation error} 
\label{subse:AsymptoticD}

{\it Proof of Theorem \ref{theorem:ad}.}
By Proposition~\ref{prop:support} we know that $\{t_i\}_{i=\hat{a}_n}^{\hat{b}_n}$ converges almost surely to a fixed collection of times in $[t_a,t_b]$. That is, the vector 
\[
\hat{\bm{q}}_n= \left[ \hat{q}_n(t_{\hat{a}_n}), \hat{q}_n(t_{\hat{a}_n+1}), \ldots, \hat{q}_n(t_{\hat{b}_n}) \right]
\] 
converges almost surely to the same limit as 
\[
\left[ \hat{q}_n(t_{\tilde{a}}), \hat{q}_n(t_{\tilde{a}+1}), \ldots, \hat{q}_n(t_{\tilde{b}}) \right] \,.
\]
Moreover, $\hat{q}_n(t)$ for any $t \in [t_{\tilde{a}},t_{\tilde{b}}]$ satisfies the Central Limit Theorem  because it is an average of independent and identically distributed observations with a known covariance matrix. Hence,  letting

\[
{\bm{q}} = \left[q(t_{\tilde{a}}), q(t_{\tilde{a}+1}), \ldots, q(t_{\tilde{b}}) \right] \,,
\] 
we have
\begin{align*}
	\sqrt{n}(\hat{\bm{q}}_n-\bm{q})\xrightarrow{d}\mathcal{N}(0,\Sigma)\,,
\end{align*}
where the covariance matrix $\Sigma\in\mathbb{R}^{(\tilde{b}-\tilde{a}+1) \times (\tilde{b}-\tilde{a}+1)}$ is given by
\begin{align*}
    \Sigma_{ii} &= v(t_{\tilde{a}+i-1}) , \\
    \Sigma_{ij} &= \rho \left(t_{\tilde{a}+i-1}, t_{\tilde{a}+j-1} \right) , \ 1 \leq i,j \leq \tilde{b}-\tilde{a}+1. 
\end{align*}
% $$\Sigma_{ii} = v(t_{\tilde{a}+i-1})$ and  $\Sigma_{ij} = \rho \left(t_{\tilde{a}+i-1}, t_{\tilde{a}+j-1} \right)$, $1 \leq i,j \leq \tilde{b}-\tilde{a}+1.$$ %$\Sigma$ is a covariance matrix of size $(\tilde{b}-\tilde{a}+1) \times (\tilde{b}-\tilde{a}+1)$, with its $i$th diagonal element $\Sigma_{ii} = v(t_{\tilde{a}+i-1})$ and its $i$th row and the $j$th column of the matrix $\Sigma_{ij} = \rho \left(t_{\tilde{a}+i-1}, t_{\tilde{a}+j-1} \right)$.

Next, by \eqref{eq:theta2} the estimator can be written as the linear combination 
\begin{align*}
	\hat{\theta}_n =-\sum_{i=\hat{a}_n}^{\hat{b}_n}\frac{\hat{q}_n(t_i)-\hat{q}_n(d(t_i))}{k_i}=\sum_{i=\hat{a}_n}^{\hat{b}_n} g_i\hat{q}_n(t_i)\,,
\end{align*}
where $k_i$ and $g_i$ are as defined in Theorem \ref{theorem:ad}. Again, Proposition~\ref{prop:support} implies $\{t_i\}_{i=\hat{a}_n}^{\hat{b}_n}$ converges almost surely to $\{t_{\tilde{a}}, t_{\tilde{a}+1}, \ldots, t_{\tilde{b}}\}$. Let $\bm{g} = [g_{\tilde{a}},g_{\tilde{a}+1},\ldots,g_{\tilde{b}}]$, the delta method \citep[Chapter 3]{VV00} can be applied to conclude that
\[
\sqrt{n} \left( \hat{\theta}_n - \theta \right) \, \xrightarrow{d} \, \mathcal{N} \left( 0,  \bm{g} \, \Sigma \, \bm{g}^\top \right) \,.
\]

\subsection{The variance computation}
\label{subsec:VarianceC}

As $n \rightarrow \infty$, $\sqrt{n} \left( \hat{\theta}_n - \theta \right)$ converges to a zero-mean normal random variable with variance 
\begin{equation} \label{eq:var}
	\sum_{i=\tilde{a}}^{\tilde{b}} g_i^2v(t_i)+2\sum_{i=\tilde{a}}^{\tilde{b}} \sum_{j> i}^{\tilde{b}} g_i g_j\rho(t_i,t_j) \,.
\end{equation}
The variance can be approximated numerically using a discrete approximation, similar to the one used for computing the expected queue length process. Observe that the unknown components in \eqref{eq:var} are $v(t_i)$ and $\rho(t_i,t_j)$. Following the scheme of Section~\ref{subsec:da}, given $F_e$, we can calculate the queue length dynamics $P_k(t), \, 0 \leq k \leq  K$ for $t \in \mathcal{T}$. With $P_k(t)$, we can compute both the first moment and the second moment of the queue length at $t$ by $\sum_{k=0}^{K} k \, P_k(t)$ and $\sum_{k=0}^{K} k^2 \, P_k(t)$, respectively. Then $v(t_i)$ can be obtained by calculating $\sum_{k=0}^{K} k^2 \, P_k(r\delta)- (\sum_{k=0}^{K} k \, P_k(r\delta))^2$. 

The calculation of $\rho(t_i,t_j)$ is a bit more involved. From its definition, 
\begin{align}
	%\rho(t_i,t_j) = \mathbb{E}\left[ Q(t_i) \, Q(t_j) \right] - \mathbb{E}\left[Q(t_i)\right]\mathbb{E}\left[Q(t_j)\right]
	\rho(t_i,t_j) = \mathbb{E}\left[ Q(t_i) \, Q(t_j) \right] - q(t_i) \, q(t_j)
\end{align}
where the first term can be written as 
\begin{align}
	\mathbb{E}\left[ Q(t_i) \, Q(t_j) \right] &= \sum_{1 \leq k, \, l \leq \infty } k \, l \, \mathbb{P} \left[Q(t_j) = l,  \, Q(t_i) = k\right] \notag \\
	&= \sum_{1 \leq k, \, l \leq \infty} k \, l \, \mathbb{P} \left[Q(t_j) = l  \mid Q(t_i) = k\right] \, \mathbb{P} \left[ Q(t_i) = k \right] \,.
\end{align}
The reason to write $\mathbb{E}\left[ Q(t_i) \, Q(t_i) \right]$ by its conditional probability is that the joint probability cannot be approximated directly. Note that for numerical purposes truncation of the sums is required. The conditional probability $\mathbb{P} \left[Q(t_j) = l  \mid Q(t_i) = k\right]$ can be obtained by calculating $P_l(t_j)$, given $P_k(t_i) = 1$ and $F_e$, which indicates that we need to calculate $P_k(t_i+r \delta), \, 1 \leq k \leq K$ for $r = 1,2,\ldots$ until $r\delta \geq t_j$.

%%%
\section{Simulation analysis} \label{section:NumericalExamples}
% \textcolor{blue}{LR: Maybe the title of the section should be simulation analysis?}

This section presents simulation analysis of the performance of the estimator. {Let $\kappa = 1000$ be the number of simulations, and denote by $\hat{\theta}_n^{(k)}$ the estimate of the $k$th simulation, for $k=1,\ldots, \kappa$}. We compare the estimates obtained with the same number of observation points $m$ but different sample sizes $n$. We also compare the estimates with the same sample size but different number of observation points each day. We use Box-plots to represent estimates of the 1000 simulations. On each box, the central mark indicates the median, and the bottom and top edges of the box indicate the 25th and 75th percentiles, respectively. The whiskers extend to the most extreme data points not considered outliers, and the outliers are plotted individually using the '+' marker symbol.  In our performance analysis, we compute the average of the estimates (AE) and the {root-mean-square deviation (RMSD)} between $\theta$ and its estimate, which are defined as 
\[
\textbf{AE} =\frac{1}{\kappa} \sum_{k = 1}^{\kappa} \,\hat{\theta}_n^{(k)}, \qquad  \textbf{RMSD} = \sqrt{\frac{1}{\kappa} \sum_{k = 1}^{\kappa} \, \left(\hat{\theta}_n^{(k)} - \theta \right)^2} \,.
\] %STD = \sqrt{\frac{1}{\kappa-1}\sum_{k = 1}^{\kappa}\left(\hat{\theta}_n^{(k)} - AE \right)^2}, \qquad
The main result of this section is that the estimator is robust to the number of observation points $m$.

%The model we focus on does not have a service closing time, so $T = \infty$ theoretically. However, to make the simulations feasible, we cannot simulate the process to an infinite time. Instead, we simulate the process until we think it is long enough, which means before the simulation stopping time, denoted by $T_s$, there should be a time interval that does not have any arrival for all the samples. Indeed, if $T$ is greater than $t_b$, then the strategic behavior of the customers is not affected by different values of $T$. The simulation time we use for the analysis in Section \ref{section:Estimator} and this section is $T_s = 20$, since we know from Figure \ref{fig:qs} that $t_b < 20$. But in reality, choosing the simulation time is a trial-and-error process. Note that even if $T_s < t_b$, the estimator of $t_b$ is biased, but the estimation of $\theta$ is not.

As in the previous section, the simulations rely on the approximation of $F_e$ using the discrete scheme described in Appendix~\ref{subsec:da}. {Let $r_i$ denote the $r_i$th slot on the discrete grid, so $r_i \delta$ is the time of the $i$th observation. E.g. $m = T_s/\delta+1$ means that $r_{i+1} - r_i = 1$ for $i = 1, \ldots, m-1$.} The sample is $\{r_i \delta  , \, \Xi^{(l)}_i\}_{i = 1}^{m}$ with $r_1 = 0$, that is, we always consider time $0$ as an observation point. We first run $1000$ simulations with $\alpha = 2, \beta = 0.2$,  yielding $\theta\approx0.091$, and $\lambda = 5, \mu = 1$, $m = T_s/\delta +1 = 20001$ for $n= 1000, 500, 100, 50$.
{The Box-plots of $\{\hat{\theta}_n^{(k)}\}_{k=1}^{1000}$ are presented in Figure \ref{fig:Boxn}, and the AE and RMSD of the estimates are presented in Table \ref{tab:n} for each $n$.} It can be observed that although the estimator quality in terms of the mean and variance decreases with $n$, the mean does not differ too much, and it is close to the true $\theta (\approx 0.091)$. 

\begin{figure}[htb!] 
\centering 
	{\includegraphics[width=0.6\linewidth]{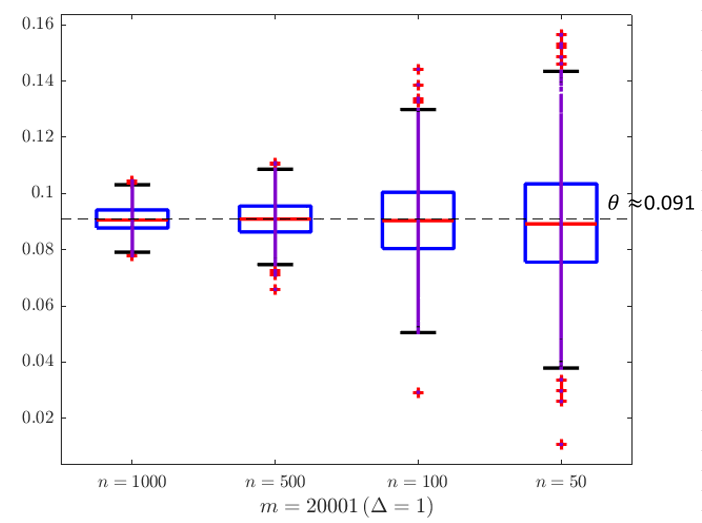}} 
	\caption{Box-plots of $\hat{\theta}$ from 1000 simulations with the same observation instants but different number of days. The true $\theta  \approx 0.091$ is represented by the dotted line.}\label{fig:Boxn}
\end{figure} 
\begin{table}[htb] 
	\scriptsize
	\caption{Characteristics for the estimates in Figure \ref{fig:Boxn}}
	\centering
	\newcolumntype{C}{>{\centering\arraybackslash}X}
	\begin{tabular}{lS[table-format=2.2]*{3}{S}}%{lCCCCCCC}
		\toprule \toprule
		\addlinespace
		& \multicolumn{4}{c}{\makecell{$m = 20001 \, \, (\Delta  = 1)$}} \\
		\cmidrule(lr){2-5}
		{}&{$n=1000$}&{$n=500$}&{$n=100$}&{$n=50$}
		\tabularnewline
		\cmidrule[\lightrulewidth](lr){1-5}\addlinespace[1ex]
		AE&0.0909 & 0.0909 & 0.0906 & 0.0896 \tabularnewline
	%	STD & 0.0046 & 0.0065 & 0.0152 & 0.0214\tabularnewline
		RMSD & 0.0046 & 0.0065 & 0.0152 & 0.0214 \tabularnewline
		\addlinespace
		\bottomrule
	\end{tabular}\label{tab:n}
\end{table}%

We next investigate how the value of $m$ affects the estimates. We assume that the inter observation times, denoted by $\Delta$, are equidistant. That is, $r_1 = 0$, and $\Delta \equiv (r_{i+1} - r_i) \, \delta$ for any $i = 1, \ldots, m-1$. Since we set the simulation time $T_s = 20$, $m = T_s/\Delta+1$. Note that when the value of $m$ or $n$ is small, it is possible that $t_{\hat{a}} > t_{\hat{b}}$
%there are less than two points in the estimated interval 
and $\theta$ cannot be estimated. Thus, we use $\eta$ to denote the number of simulations for which $\theta$ was successfully estimated in the $1000$ simulations we tried. We present the results of 1000 simulations with $\alpha = 2, \beta = 0.2, \lambda = 5, \mu = 1, n= 1000$ for $m = 001, 41, 21, 5 \, (\Delta = 0.001,0.5,1,5)$, and show the estimates $\{\hat{\theta}_n^{(k)}\}_{k=1}^{1000}$ in Figure {\ref{fig:Boxm} (a). Figure \ref{fig:Boxm} (b) depicts the asymptotic distribution of $\sqrt{n} \left(\hat{\theta}_n - \theta\right)$ derived in Theorem~\ref{theorem:ad}.} It can be seen that the estimator is robust to the value of $m$. The mean is very close to the true $\theta$ for all values of $m$.  {The variance is larger when the number of observations points $m$ is small.  However, we further observe that increasing $m$ from $21$ does not noticeably impact the variance of the estimation error. The characteristics of the simulations results are summarized in Table \ref{tab:delta} and it can be observed that the RMSD is less than $0.01$} for $m = 20001, 41, 21, 5$, while it is slightly higher for $m=5$. {It is important to point out that Figure \ref{fig:Boxm} (b) is not the output of simulations, but rather a direct computation of the asymptotic variance $\bm{g} \Sigma \bm{g}^\top$ from Theorem~\ref{theorem:ad}. Thus, this step can be carried out when designing a sampling scheme in order to determine the number of sampling instants $m$ required for good accuracy in terms of asymptotic variance of the estimation errors. }

\begin{figure}[ht] 
\centering
	\subcaptionbox{}%
	{\includegraphics[width=0.495\linewidth]{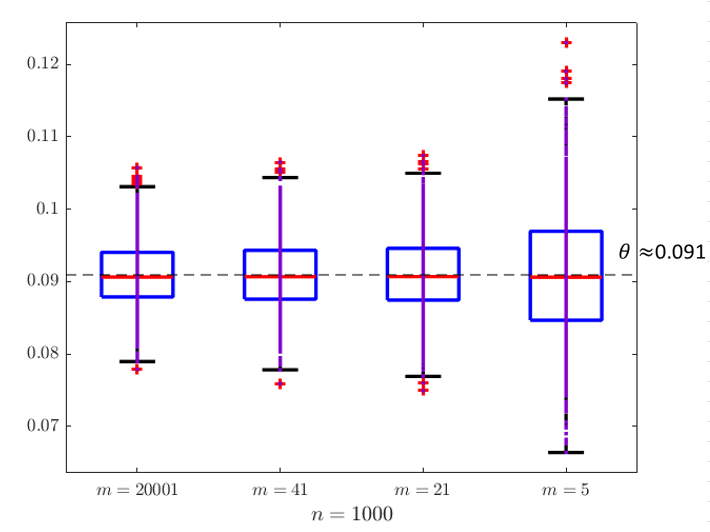}} 
	\subcaptionbox{}%
	{\includegraphics[width=0.495\linewidth]{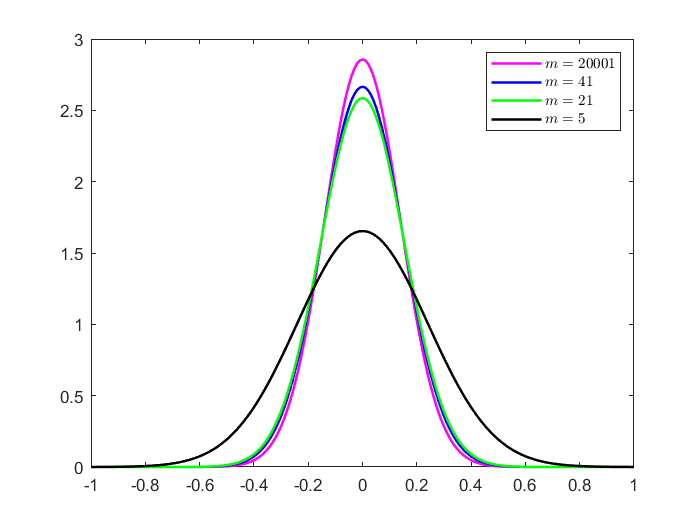}} 
	\caption{(a) depicts Box-plots of $\hat{\theta}_n$ from 1000 simulations with the same observation instants but different number of days. The true $\theta  \approx 0.091$ is represented by the dotted line. (b) depicts the scaled asymptotic distribution $\mathcal{N} (0, \bm{g} \, \Sigma \, \bm{g}^\top)$ in Theorem \ref{theorem:ad} for $m = 20001, 41, 21, 5$.} \label{fig:Boxm}
\end{figure}

\begin{table}[ht] 
	\scriptsize
	\caption{Characteristics for the estimates in Figure \ref{fig:Boxm}.}
	\centering
	\newcolumntype{C}{>{\centering\arraybackslash}X}
	\begin{tabular}{lS[table-format=2.2]*{3}{S}}%{lCCCCCCC} 
		\toprule \toprule
		\addlinespace
		& \multicolumn{4}{c}{\makecell{$n = 1000$}} \\
		\cmidrule(lr){2-5}
		{}&{$m = 20001$ ($\Delta = 0.001$)}&{$m = 41$ ($\Delta = 0.5$)}&{$m = 21$ ($\Delta = 1$)} &{$m = 5$ ($\Delta = 5$)}
		\tabularnewline
		\cmidrule[\lightrulewidth](lr){1-5}\addlinespace[1ex]
		AE&00.0909 & 0.0910 & 0.0910 & 0.0909 \tabularnewline
	%	STD & 0.0046 & 0.0049 & 0.0053 & 0.0093\tabularnewline
		RMSD & 0.0046 & 0.0049 & 0.0053 & 0.0093\tabularnewline
	    % $\bm{g} \, \Sigma \, \bm{g}^\top$ & 0.0193 & 0.0199 & 0.0189 & 0.0719 \tabularnewline
		\addlinespace
		\bottomrule
	\end{tabular}\label{tab:delta}
\end{table}%

\begin{figure}[h!] 
	\subcaptionbox{$m = 20001$}%
	{\includegraphics[width=0.508\linewidth]{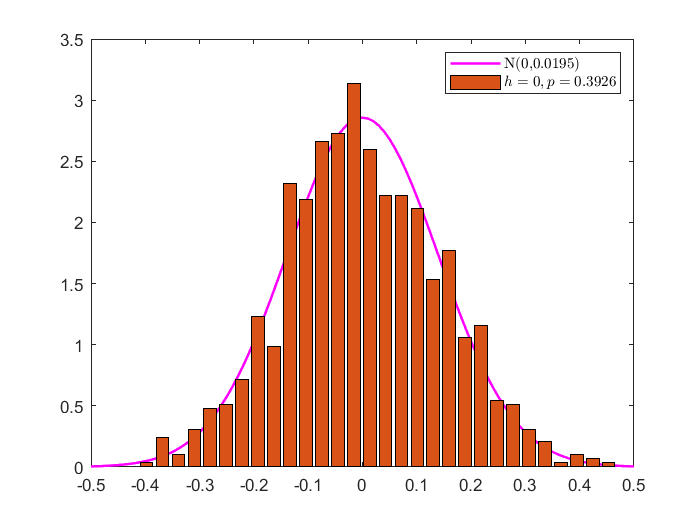}}
	\subcaptionbox{$m = 41$}%
	{\includegraphics[width=0.508\linewidth]{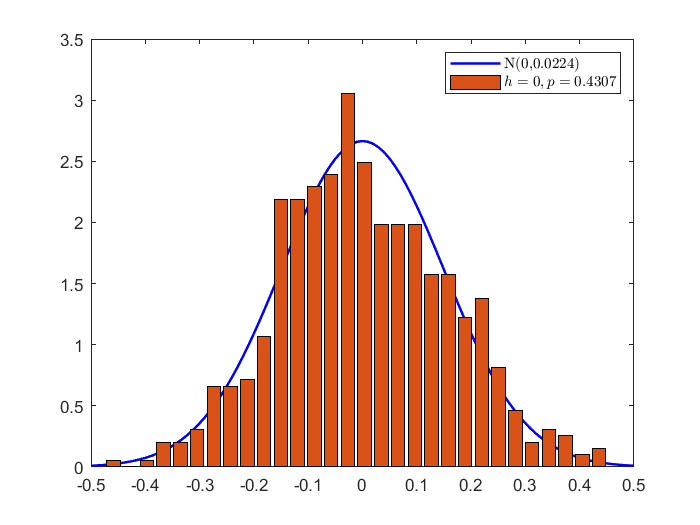}}
	\subcaptionbox{$m = 21$}%
	{\includegraphics[width=0.508\linewidth]{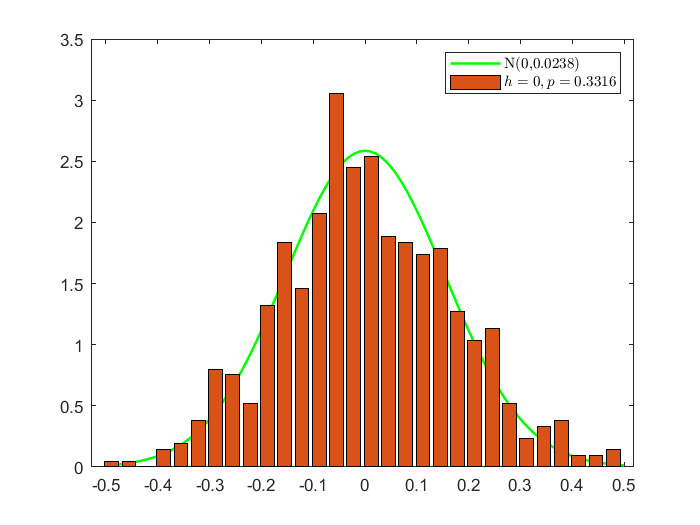}}
	\subcaptionbox{$m = 5$}%
	{\includegraphics[width=0.495\linewidth]{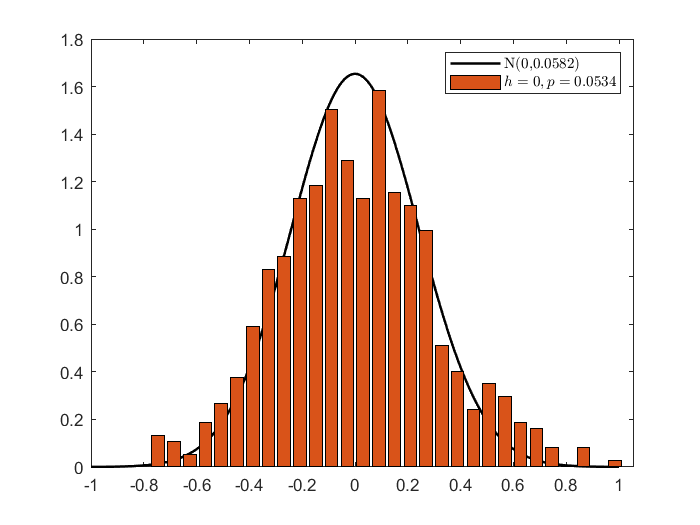}} 
	\hspace{\fill}
	\caption{The normalized histograms of 1000 simulations for $n = 1000$ and different values of $m$. The scaled asymptotic distribution N$(0, \bm{g} \, \Sigma \, \bm{g}^\top)$ in Theorem \ref{theorem:ad} is also plotted. We also did normality test, and present the } \label{fig:hist}
\end{figure}

\begin{table}[h!]
    \caption{Estimation of $\theta( \approx 0.091)$ for different $(n,m)$ set, the results calculated from 1000 simulations are summarized as $AE(RMSD), \, h, p$. } % title of Table
    \centering % used for centering table
    {\footnotesize\begin{tabular}{| c c c c c |} % centered columns (4 columns)
    \hline\hline %inserts double horizontal lines
        & $m = 20001 \, (\Delta = 0.001)$&$m = 41 \, (\Delta = 0.5)$&$m = 21 \, (\Delta =1)$ &$m = 5 \, (\Delta =5)$ \\
	\hline % inserts single horizontal line
	$\bm{g} \, \Sigma \, \bm{g}^\top$ &$0.0195$  & $0.0224$ & $0.0238$ & $0.0582$  \\ 
	\hline
	$n = 50$& 0.0896  (0.0214) &  0.0893 (0.0238) & 0.0886 (0.0265) & 0.0878 (0.0396)  \\ 
	& $1, \, 0.0105$ & $1,\,0.0054$  & $1, \, 3.3015\times10^{-6}$ & $1,\,0.039\mid \eta = 136$  \\ 
	\hline
	$n = 100$& 0.0906 (0.0152) &  0.0905 (0.0167) & 0.0900 (0.0189) & 0.0892 (0.0289)\\
	& $0, \, 0.2868$ & $0, \, 0.0638$  & $1, \, 1.6454\times10^{-4}$ & $1, \, 0.0131 \mid \eta = 288$ \\ 
	\hline
	$n = 500$ & 0.0909 (0.0065) & 0.0909 (0.0070) & 0.0908 (0.0076) & 0.0904 (0.0127)\\
	& $0, \, 0.2644$ & $0, \, 0.2016$ & $0,\,0.0783$ & $1,\, 4.4399 \times 10^{-4} \mid \eta = 833$ \\ 
	\hline
	$n = 1000$  &0.0909 (0.0046) &  0.0910 (0.0049) & 0.0910 (0.0053) & 0.0909 (0.0093)  \\  
	& $0, \, 0.3926$ & $0, \, 0.3927$  & $0, \, 0.2868$ & $0,\, 0.0534\mid \eta = 974$ \\ [1ex] % [1ex] adds vertical space
	\hline 
	\end{tabular}} \label{tab:ndelta}
	\label{table:nonlin} 
\end{table}
{
We applied a one-sample Kolmogorov-Smirnov test to check whether the estimates in Figure \ref{fig:Boxm} (a) are from a given normal distribution. Specifically, we tested whether $\sqrt{n} \left(\hat{\theta}_n - \theta\right)/\bm{g} \Sigma \bm{g}^\top $ in the 1000 simulations come from a standard normal distribution, using function $kstest$ in Matlab. The function returns a test decision $h$ ($h = 1$ if the test rejects the null hypothesis at the 5\% significance level, or $0$ otherwise) for the null hypothesis that the data comes from a standard normal distribution and the $p$-value $p$ of the hypothesis test. Figure \ref{fig:hist} depicts the normalized histograms of the 1000 simulations for $n = 1000, m = 20001, 41, 21, 5$. The asymptotic distribution of $\sqrt{n} \left(\hat{\theta}_n - \theta \right)$ and the values of $h$ and $p$ are also denoted in the plot. The test results indicate that the normal approximation of Theorem \ref{theorem:ad} is very good for a sample size of $n=10^3$.}

The estimation accuracy is obviously a function of the number of days sampled, and as we saw in the previous section accurate estimation is guaranteed as $n\to\infty$. In some cases sampling the queue many times during the day may be costly and it is therefore of interest to explore how many sampling instances are required for a good estimator. To have an overview of estimates under different $(n,m)$ set, we calculate the estimates for $n = 1000, 500, 100, 50, \, m = 20001, 41, 21,5$, and summarize the results as {AE (RMSD) in the first line, and $h, p$ in the second line in Table \ref{tab:ndelta}.  The value of $\bm{g} \Sigma \bm{g}^\top$ is also presented, and $\eta$ is $1000$ by default, that is, if $\eta$ is not displayed, then all the {1000} simulations were successful. It can be seen that for $m = 5$, $\eta$ increases with $n$ with almost instants successful for $n=1,000$. This further implies that for large enough sample sizes a good estimation can be obtained even for a small number of sampling times throughout the day.} %and $\eta = 0$ when $n = 50$, which means $\theta$ could not be estimated for all the {1000} simulations. Also note that although compared with the case $m = 5,n =500$, the $STD$ when $m = 5,n =100$ is smaller, the estimation accuracy is much worse.  
The estimator quality is relatively robust to the number of observations each day. However, when the sample size is small it is better to sample the queue more often during the day to ensure an accurate estimator is obtained. {Also, when $m = 41, 20001$, the asymptotic normality of our estimator can already be observed when $n = 100$ (as was also illustrated in Figure~ \ref{fig:Boxm} (b)). The main conclusion from the simulation experiments is that 
%in the case where we could estimate $\theta$,  
a few observations of the queue length every day are sufficient for successful and accurate estimation of $\theta$. 
}

{
Finally, in practice,  if sampling the queue is costly, we do not need to observe the queue length for $m$ times each day for every day. For example, we can observe it for several days with a large $m$ number of points, in order to estimate $t_a$ and $t_b$ first. Then, we only need to select more than one point (less than $m$) in $\{0\} \cup [t_{\hat{a}_n}, t_{\hat{b}_n}]$ to estimate $\theta$. On the other hand, if the queue is only observable for a small number of days (i.e., small $n$), then it is better to sample more time instants within the day in order to increase the chance of successfully identifying times inside the support.
}

\section{Extensions} \label{sec:ext}

The estimator in Section \ref{subsec:E} can be easily modified to apply to the case with service closing time or the case where there are arrivals before time $0$. For the model with service closing time, we only discuss the case where there are no arrivals before time $0$. For other cases with service closing time, the analysis is similar. 

If $T \geq t_b$, the equilibrium arrival distribution is the same as the case service closing time, so the estimator is the same. When $t_a \leq T < t_b$, although the equilibrium arrival distribution is different, the relationship in Equation \eqref{eq:theta0} still holds, thus the estimator is exactly the same as $\hat{\theta}_n$ proposed in Section \ref{section:Estimator}. The only difference is that $\hat{b}_n \leq T$. With Algorithms \ref{alg:ql} and \ref{alg:ct} given in the appendix, we can calculate the equilibrium arrival distribution $F_e$ and the expected queue length $q(t)$. Both $F_e$ and $q(t)$ are plotted in Figure \ref{fig:2qs} (a). It can be observed that the expected queue length is a straight line from $t_a$ to $T$, and the slope can be used to estimate $\theta$. The estimator in \eqref{eq:theta2} works for the model with service closing time.

\begin{figure}[ht] 
	\subcaptionbox{The model with no arrivals before time $0$ but with service closing time ($\lambda = 5, \mu = 1,\alpha = 2, \beta = 0.2, T = 10$).}%
	{\includegraphics[width=0.495\linewidth]{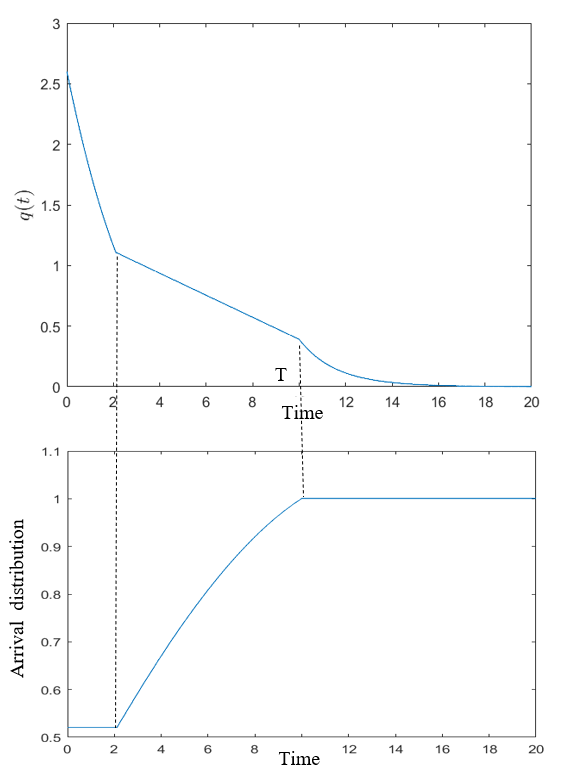}} 
	\subcaptionbox{The model with arrivals before time $0$ but no service closing time ($\lambda = 10, \mu = 1,\alpha = 2, \beta = 0.1$).}%
	{\includegraphics[width=0.508\linewidth]{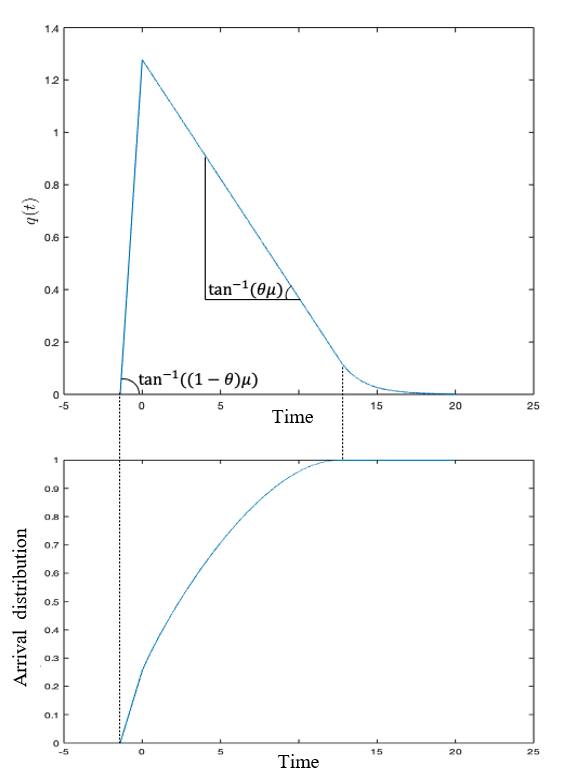}}%	\hspace{\fill}
	\caption{The Nash equilibrium arrival distribution and the expected queue length under it.} \label{fig:2qs}
\end{figure}

In the model where there are arrivals before time $0$ and no service closing time, the expected cost can be written as
\begin{equation} 
	\mathbb{E}[C(t)]=
	\begin{cases}
		\displaystyle\frac{\alpha+\beta}{\mu}q(t) -\alpha t  & t < 0, \, \\[+6pt]
		\displaystyle\frac{\alpha+\beta}{\mu}q(t)+\beta t   & t \geq 0 \,.
	\end{cases}
\end{equation}
Since the support of the arrival distribution is $[-w,t_w]$, and the expected cost faced by a customer if she arrives at any $t \in [-w,t_w]$ is the same, we have
for $-w \leq s,~t \leq 0$, 
\begin{align}
    & -\alpha s +  (\alpha+\beta)\frac{q(s)}{\mu} \, = \, 	-\alpha t +  (\alpha+\beta)\frac{q(t)}{\mu} \\
    & \Leftrightarrow   \, \, \frac{\alpha}{\alpha+\beta} = \frac{q(s) - q(t)}{\mu \,(s-t)} \notag \\
    & \Leftrightarrow   \, \, \theta = 1-\frac{q(s) - q(t)}{\mu \,(s-t)} \,, \notag
\end{align}
for $-w \leq s \leq 0 < t \leq t_w$,
\begin{align}
	& -\alpha s +  (\alpha+\beta)\frac{q(s)}{\mu} \, = \,  \beta t + (\alpha+\beta)\frac{q(t)}{\mu} \label{eq:we1}\\
	& \Leftrightarrow  \, \, (\alpha+\beta)\frac{q(s) - q(t)}{\mu} = (\alpha+\beta)s - \beta(s-t)   \notag \\
    & \Leftrightarrow  \, \, \theta = \frac{s}{s-t} - \frac{q(s)-q(t)}{\mu \, (s-t)} \,,\notag
\end{align}
for $0 \leq s < t \leq t_w$,
\begin{align}
    & \beta s + (\alpha+\beta)\frac{q(s)}{\mu} \, = \, \beta t + (\alpha+\beta)\frac{q(t)}{\mu} \\
	& \Leftrightarrow  \,\, \theta = - \frac{q(s)-q(t)}{\mu \, (s-t)} \,.
\end{align}
%\begin{align*}
%\alpha(t -s) &=  (\alpha+\beta) \frac{q(t) - q(s)}{\mu} \\
%\frac{\beta}{\alpha+\beta} &= 1-\frac{\alpha}{\alpha+\beta}  = 1-\frac{q(s)-q(t)}{\mu (s-t)}
%\end{align*}
%\begin{align*}
%-\alpha s +  (\alpha+\beta)\frac{q(s)}{\mu} &= \beta t + (\alpha+\beta)\frac{q(t)}{\mu} \\
%\beta (t - s)  &=  (\alpha+\beta) (\frac{q(s) - q(t)}{\mu} - s)\\
%\frac{\beta}{\alpha+\beta} &= \frac{s}{s-t} - \frac{q(s) - q(t)}{\mu(s-t)}
%\end{align*}
Thus,
\begin{equation} \label{eq:thetaw}
	\theta = 
	\begin{dcases} 
		\displaystyle 1-\frac{q(s)-q(t)}{\mu (s-t)} & -w \leq s,~t \leq 0 \\
		\displaystyle \frac{s}{s-t} - \frac{q(s) -q(t)}{\mu(s-t)} &  -w \leq s \leq 0 < t \leq t_w \\
		\displaystyle  - \frac{q(s) -q(t)}{\mu(s-t)} & 0 \leq s < t \leq t_w \,.
	\end{dcases}
\end{equation}
That is $\theta$ can be expressed as a function of the expected queue length at $t \in [-w,t_w]$. Thus following the same estimation method as in Section \ref{section:Estimator}, the estimator of $\theta$ can be constructed based on the sample mean of the observed queue lengths in an estimated support of $[-w, t_w]$. The equilibrium arrival distribution $F_e$ can be calculated using Algorithm \ref{alg:w}. The expected queue length $q(t)$ can then be calculated using  \ref{alg:ql}. Both $F_e$ and $q(t)$ are plotted in Figure \ref{fig:2qs} (b). It can be inferred from Equation \eqref{eq:thetaw} that the expected queue length when {$t \in [-w,0]$ and $t \in [0, t_w]$} should form straight lines with slope $(1-\theta)\mu$ and $-\theta\mu$, respectively. This can also be observed in Figure \ref{fig:2qs} (b). 
\begin{remark}
Although $f_e$ is not continuous at $t = 0$, Equation \eqref{eq:we1} still holds. Thus, it is still possible to estimate $\theta$ when the two observation points inside the support are of opposite signs.
\end{remark}

Our methodology can be extended almost directly to other model variations, such as the model with order penalties in \citet{R14} and the model with earliness costs in \citet{SK17}.  We omit the details here. Furthermore, the method presented here has potential to be applied for systems with more elaborate dynamics. For example,  different service regimes such as processor sharing, or non-Markovian systems such as a G/G/1 with a general distribution for the number of customers and service times. In such cases however, the queue length observations are not sufficient and one must be able to sample the virtual workload (or waiting times) at different times instants, for example by sending small probes to the system. If estimating the workload is possible then an estimation equation similar to ours can be constructed from the equilibrium condition that the expected cost is constant throughout the support of the equilibrium arrival distribution. Note that computing the equilibrium arrival distribution in elaborate systems is typically intractable, but nevertheless the cost parameters can be estimated as long as the components of the cost function can be observed.

\section*{Acknowledgments}
\noindent  The authors would like to thank Peter Taylor for his valuable comments and advice. J. Wang would like to thank the University of Melbourne for supporting her work through the Melbourne Research Scholarship and the Albert Shimmins Fund.

%%%%%%%%%%%%%%%%%%%%%%%%%%%%%%%%%%%%%%%%%
%%%%%%%%%%%%%%%%%%%%%%%%%%%%%%%%%%%%%%%%%
%%%%%%%%%%%%%%%%%%%%%%%%%%%%%%%%%%%%%%%%%
%%%%%%%%%%%%%%%%%%%%%%%%%%%%%%%%%%%%%%%%%
%%%%%%%%%%%%%%%%%%%%%%%%%%%%%%%%%%%%%%%%%
%%%%%%%%%%%%%%%%%%%%%%%%%%%%%%%%%%%%%%%%%

\clearpage

\begin{appendices}
\section{Discrete approximation} \label{subsec:da}

This section explains how $F_e$ can be numerically approximated. 
If we denote a general arrival distribution by $F$ and its probability density function by $f$, then the arrival process is a non-homogeneous Poisson process with intensity measure $\lambda f(t)$ for all $t\geq 0$. The queue length dynamics satisfy the Kolmogorov forward equations
\begin{align}
	& P'_0(t) \, = \, P_1(t) \mu - P_0(t) \lambda f(t), \label{eq:D1} \\
	& P'_k(t) \, = \, P_{k-1}(t) \lambda f(t) + P_{k+1}(t) \mu - P_k(t) \label{eq:D2}\, \left(\lambda f(t)+\mu\right), \qquad k = 1,2,\ldots\,.
\end{align}
The equilibrium arrival distribution $F_e$ satisfies \eqref{eq:wof1}, \eqref{eq:ed2}, and a set of non-linear differential equations, which do not admit an analytic expression. We adopt the finite difference method, which was also mentioned in \citet[Section 3.1, Algorithm 1]{HR20} and was termed as a {\it discrete approximation}, to numerically obtain $F_e$ and the associated expected cost. 

To make the calculation of the expected queue length feasible, we truncate the queue length at $K$. Specifically, we assume that customers can choose to arrive at a time on a discrete grid $\mathcal{T} \equiv \{0, \delta, 2\delta, \ldots\}$, and the queue has a buffer size of $K$. When the value of $\delta$ is very small, with high probability there is at most one event happening in $\delta$, thus for $r = 1,2,\ldots$ and $k = 0,1, \ldots, K$, the queue length dynamics $P_k$ on $\mathcal{T}$ satisfy
{\footnotesize \begin{align} 
	& P_0((r+1)\delta) \, \approx \, P_0(r \delta) + P_1(r\delta) \mu -P_0(k\delta)\lambda f(r\delta) + o(\delta) \label{eq:Qdynamics1} \\
	& P_k((r+1)\delta) \, \approx \label{eq:Qdynamics2}\\ 
	& \qquad P_k(r\delta) + P_{k-1}(r\delta) \lambda f(r\delta) + P_{k+1}(r\delta) \mu - P_k(r\delta) \, \left(\lambda f(r\delta)+\mu\right) + o(\delta), \, 1 \leq k \leq K-1   \notag \\
	& P_K((r+1)\delta) \approx 1 - \sum_{k=0}^{K-1} \, P_k((r+1)\delta)+ o(\delta) \label{eq:Qdynamics3} \,, 
\end{align}}which are the finite difference scheme applied to Equations \eqref{eq:D1} and \eqref{eq:D2}. %For convenience, we drop the subscription $F_e$, and let the expected value and dynamics be that under the Nash equilibrium by default. 
The expected cost 
\begin{equation} \label{eq:cost}
	\mathbb{E}_F[C(r\delta)]\, \approx \,
	\begin{dcases}
		\frac{(\alpha+\beta)\lambda p_e}{2\mu} & r = 0\\
		\frac{\alpha+\beta}{\mu} q(r\delta)+ \beta r \delta & r> 0 \,,
	\end{dcases}
\end{equation}
where $q(r\delta) \equiv \sum_{k=1}^{K}k P_k(r\delta)$ is the approximated expected queue length at slot $r$. For convenience, we drop the subscript $F$, and let the expected value and dynamics be that under the given arrival distribution for the rest of the paper. Increasing $K$ or decreasing $\delta$ clearly improves the accuracy of the approximation, but this is at the expense of calculation speed.  We set $K = \min \{m: \sum_{k = 0}^{m} {\lambda^k \, e^{-\lambda}}/k! \geq 1-10^{-6}\}$, and $\delta = 0.001$ throughout the paper.

The values of $t_a$ and $t_b$ are approximated by $r_a \delta$ and $r_b\delta$. In the following, we explain how to find $p_e$, $r_a$, $r_b$, and $f_e$ on $\mathcal{T}\cap[r_a\delta, r_b\delta]$. In each iteration, when the value of $p_e$ is given, the expected cost $\EX[C(0)]$ faced by customers arriving at time zero can be calculated. It follows from \cite{H13} that $F_e$ has a zero density along the interval $(0,t_a)$, which means $f(r \delta) = 0$ until $r  \geq r_a$. For $r = 1,2, \ldots, r_a$, since $f(r\delta) = 0$, the queue length dynamics at time $r\delta$ can be calculated using Equations \eqref{eq:Qdynamics1}-\eqref{eq:Qdynamics3}, the expected cost faced by a customer arriving at $r\delta$ can then be determined. The reason for $f_e(t) = 0, t \in (0,t_a)$ is that the expected cost faced by customers arriving at anytime in $(0,r_a \delta)$ is greater than $\EX[C(0)]$, which can also be inferred from Equation \eqref{eq:cost2}. Hence, to determine the value of $r_a$, we keep computing the queue dynamics, and then the expected cost for $t = r \delta$ from $r =1$ until $\EX[C(t)] \leq \EX[C(0)]$, then $r_a = \inf\{r: \EX[C(r \delta)] \leq \EX[C(0)], r \geq 1\}$. In \citet{H13}, the author calculated $t_a$ by working out the expression of the expected cost at time $t \in (0,t_a)$. Here we use an alternative way, and provide a more detailed explanation of the method in \citet{H13} and its comparison with our method in Remark~\ref{rem:DA} below. 

For $r \geq r_a$, the arrival density $f(r\delta)$ is defined by Equation \eqref{eq:wof1}, then the queue length dynamics can be obtained using Equations \eqref{eq:Qdynamics1}-\eqref{eq:Qdynamics3}. We keep calculating $f(r\delta)$ and the queue length dynamics until $f(r\delta) \leq 0$, and $r_b = \inf \{r: f(r\delta) \leq 0, r > r_a \}$. Thus, given the value of $p_e$, the values of $r_a$, $r_b$, and $f(r\delta)$ in $[r_a\delta, r_b \delta]$ can be determined. Another condition that $p_e$, $f_e$, $r_a$ and $r_b$ need to satisfy is 
\begin{equation}
	p_e+ \int_{t = t_a}^{t_b} f_e(t)\, dt = 1\,. \label{eq:One}
\end{equation}
Hence, we can initially guess a value for $p_e$, and then adjust it iteratively using the bisection method until Equation \eqref{eq:One} is satisfied. Specifically, we start with $p_1 = 0, p_2 = 1$, and always set $p_e=\displaystyle \frac{p_1+p_2}{2}$. At the end of each iteration, we set $p_2 = p_e$ if the total probability is greater than one, and $p_1 = p_e$ otherwise. This calculation process is summarized in Algorithm \ref{alg:wo}.

\begin{remark}\label{rem:DA}
The arrival distribution has zero density in $(0,t_a)$, so given $p_e$ at time zero, the expected waiting time faced by a customer if she arrives at any time $t \leq t_a$ has an analytic expression. This expression was derived in \citet[Lemma 3.3]{H13}, where the author proposed two methods to calculate its quantity. One method is computing it with the assistance of Bessel’s functions, and the other method is estimating it using a Monte Carlo simulation procedure. The goal of working out the expression is to find the time at which if a customer arrives, her expected cost will be the same as the expected cost if she arrives at time $0$. In our work, we do not adopt the expression of $t_a$, but keep calculating the expected cost until it is no longer greater than $\mathbb{E}[C(0)]$ and note down the time $\inf\{r: \EX[C(r \delta)] \leq \EX[C(0)], r \geq 1\}$. Although our method to estimate $t_a$ does not use the analytical properties of $t_a$, it performs very well. In fact, in all the numerical examples we tried, it calculated $t_a$ faster.
\end{remark}

\algnewcommand\algorithmicswitch{\textbf{switch}}
\algnewcommand\algorithmiccase{\textbf{case}}
\algnewcommand\algorithmicassert{\texttt{assert}}
\algnewcommand\Assert[1]{\State \algorithmicassert(#1)}%
% New "environments"
\algdef{SE}[SWITCH]{Switch}{EndSwitch}[1]{\algorithmicswitch\ #1\ \algorithmicdo}{\algorithmicend\ \algorithmicswitch}%
\algdef{SE}[CASE]{Case}{EndCase}[1]{\algorithmiccase\ #1}{\algorithmicend\ \algorithmiccase}%
\algtext*{EndSwitch}%
\algtext*{EndCase}%
	%\section{The model without early birds}
	
\section{Algorithms}
\subsection{The approximated expected queue length} \label{appendix:expectedQ}
	\begin{algorithm}[H] 
		\caption{Expected queue length.}		\label{alg:ql}
		\begin{algorithmic}[1]
		\scriptsize
		\State \textbf{Input:} $\lambda, \, \mu, \alpha, \, \beta, \, \delta, \, T$
		\State \textbf{Output:} $q(t)$
		\State $K = \min \{m: \sum_{k = 0}^{m} \lambda^k e^{-\lambda}/k! \geq 1-10^{-6}\}$
		\Switch{model} \Comment{Calculate the equilibrium arrival distribution whose details are in Algorithm \ref{alg:wo}, \ref{alg:ct}, and \ref{alg:w}}
		\Case{$1$} \Comment{The case where there are no arrivals before time $0$}%{without early birds}
		\State $[p_e, t_a, t_b, f_e] = f_{\mathcal{W/O}}(\lambda, \mu, \alpha, \beta, \delta)$
		\EndCase
		\Case{$2$} \Comment{The case where there are no arrivals before time $0$, and the service closes at time $T$}%{with service closing time}
		\State $[p_e, t_a, t_b, f_e] = f_{\mathcal{CW/O}}(\lambda, \mu, \alpha, \beta, \delta, T)$
		\EndCase
		\Case{$3$} \Comment{The case where there are arrivals before time $0$}%{with early birds}
		\State $[w, t_w, f_e] = f_{\mathcal{W}}(\lambda, \mu, \alpha, \beta, \delta)$
		\EndCase
		\EndSwitch
		\Procedure{queue length distribution under equilibrium after no one joins}{}
		\While{$P_0(r \delta) < 1$} \Comment{Calculate the queue dynamics until the system is empty}
		\State calculate $P_k(r \delta)$ for $k = 0,1,\ldots, K$  using $p_e, t_a, t_b, f_e$ or $w, t_w, f_e$, and Equations \eqref{eq:Qdynamics1}-\eqref{eq:Qdynamics3}
		\State$r = r+1$
		\EndWhile
		\EndProcedure
		\State $q(r\delta) = \sum_{k = 0}^{K} k \, P_k(r\delta)$ for $r = 0,1,2, \ldots$ \Comment{Approximate the expected queue lengths}
		\end{algorithmic}
	\end{algorithm}
	\subsection{The case with no arrivals before time $0$}\label{appendix:web}
	\begin{algorithm}[H]
		\caption{Equilibrium arrival distribution for the case with no arrivals before time $0$.}	\label{alg:wo}
		\begin{algorithmic}[1]	
			\scriptsize
			\State \textbf{Input:} $\lambda, \, \mu, \alpha, \, \beta, \, \delta$ 
			\State \textbf{Output:} equilibrium arrival distribution $F_e$, including $p_e$, $t_a$, $t_b$, and $f_e$ on $\{0\} \cup [t_a,t_b]$
			\Procedure{$f_{\mathcal{W/O}}$}{$\lambda, \mu, \alpha, \beta, \delta$} \Comment{Use bisection to obtain $p_e$}
			\State \textbf{Initialization:} $p_1 = 0, \, p_2 = 1, \, p_e=\displaystyle \frac{p_1+p_2}{2}$ \Comment{Initialize the value of $p_e$}
			\While{$p_2-p_1 > 10^{-6}$}  \Comment{Set the tolerance of the bisection method as $10^{-6}$}
			\State $c = \displaystyle \frac{(\alpha+\beta) \lambda p_e}{2 \mu}$ \Comment{The expected cost faced by a customer arriving at time $0$}
			\State $P_k(0) = \displaystyle \left(\frac{(\lambda p_e)^k \, e^{-\lambda p_e}}{k!}\right)_{k = 0:K}$ \Comment{The queue length distribution at time $0$}
			\State $r = 1$
			\Do
			\State calculate $P_k(r \delta)$ for $k = 0,1,\ldots, K$ using Equations \eqref{eq:Qdynamics1}-\eqref{eq:Qdynamics3}
			\If{$\left(\beta \, r \, \delta+(\alpha+\beta) \, \sum_{k = 0}^{K} \displaystyle \frac{ k \, P_k(r\delta)}{\mu} \right) \, > \,c$}  \Comment{Check whether $r \delta < t_a$}
			\State $f_e(r\delta) = 0$ \Comment{Arrival distribution density when $r \delta < t_a$}
			\Else
			\State$f_e(r\delta) = \displaystyle \frac{(1-P_0(r \delta))\mu}{\lambda}-\frac{\beta \mu}{(\alpha+\beta)\lambda}$ \Comment{Arrival distribution density when $r \delta \geq t_a$}
			\EndIf
			\State $r = r+1$
			\State $F_e = p_e+ \delta \,f_e \bm{e}$, where $\bm{e}$ is a vector of $1$'s of the appropriate size
			\doWhile{$F_e < 1$}% and $\min f_e \geq 0$
			\Comment{The calculation of $f_e$ stops if $F_e \geq 1$ or $f_e(t)< 0$}
			\If{$F_e >= 1$}%, \, \min f_e >= 0$} 
			\Comment{Update the range for $p_e$}
			\State $p_2 = p_e$ 
			\Else %{$F_e < 1, \, \min(f) < 0$}
			\State $p_1=p_e$
			\EndIf
			\State $p_e = \displaystyle \frac{p_1+p_2}{2}$
			\EndWhile
			\State $t_a = \inf\{r\delta \, : \, f_e(r\delta) > 0\}$, $t_b  = \sup\{r\delta \, : \, f_e(r\delta) > 0\}$
			\EndProcedure
		\end{algorithmic}
	\end{algorithm}	
\subsection{Finite closing time} \label{appendix:extensions}
	\begin{algorithm}[H]
		\scriptsize
		\caption{Equilibrium arrival distribution for the model with service closing time.}	\label{alg:ct}
		\begin{algorithmic}[1]	
			\State \textbf{Input:} $\lambda, \, \mu, \alpha, \, \beta, \, \delta, \, T$ 
			\State \textbf{Output:} equilibrium arrival distribution $F_e$, including $p_e$, $t_a$, $t_b$, and $f_e$ on $\{0\} \cup [t_a,t_b]$ \Comment{$t_b$ can be $T$}
			\Procedure{$f_{\mathcal{CW/O}}$}{$\lambda, \mu, \alpha, \beta, \delta, T$} \Comment{Use Bisection to obtain $p_e$}
			\State \textbf{Initialization:} $p_1 = 0, \, p_2 = 1, \, p_e=\displaystyle \frac{p_1+p_2}{2}$ \Comment{Initialize the value of $p_e$}
			\While{$p_2-p_1 > 10^{-6}$} \Comment{Set the tolerance of the bisection method as $10^{-6}$}
			\State $c = \displaystyle \frac{(\alpha+\beta) \lambda p_e}{2 \mu}$ \Comment{The expected cost faced by a customer arriving at time $0$}
			\State $P_k(0) = \displaystyle \left(\frac{(\lambda p_e)^k \, e^{-\lambda p_e}}{k!}\right)_{k = 0:K}$ \Comment{The queue length distribution at time $0$}
			\For{$r = 1: \delta : \lceil \frac{T}{\delta} \rceil$}
			\State calculate $P_k(r \delta)$ for $k = 0,1,\ldots, K$ using Equations \eqref{eq:Qdynamics1}-\eqref{eq:Qdynamics3}
			\If{$\left(\beta \, (r \delta)+(\alpha+\beta) \, \sum_{k = 0}^{K} \displaystyle \frac{ k \, P_k(r\delta)}{\mu} \right) \, > \,c$}  \Comment{Check whether $r \delta < t_a$}
			\State $f_e(r\delta) = 0$ \Comment{Arrival distribution density when $r \delta < t_a$}
			\Else
			\State$f_e(r\delta) = \displaystyle \frac{(1-P_0(r \delta))\mu}{\lambda}-\frac{\beta \mu}{(\alpha+\beta)\lambda}$ \Comment{Arrival distribution density when $r \delta \geq t_a$}
			\EndIf %\textbf{end}
			\If{$F_e >= 1$ or $f_e(r\delta) < 0$} 
			\State  break \Comment{The calculation of $f_e$ stops if $F_e \geq 1$ or $f_e(t)< 0$}
			\EndIf %\textbf{end}
			\State $F_e = p_e+ \delta \,f_e \bm{e}$ % where $\bm{e}$ is a vector of $1$'s of the appropriate size.
			\EndFor
			\If{$F_e >= 1$} $\quad p_2 = p_e$  	\Comment{Update the range for $p_e$}
			\Else $\quad p_1=p_e$ %{$F < 1, \, \min(f) < 0$}
			\EndIf %\textbf{end}
			\State $p_e = \displaystyle \frac{p_1+p_2}{2}$
			\EndWhile %\textbf{end}
			\State $t_a = \inf\{r\delta \, : \, f_e(r\delta) > 0\}$, $t_b  = \sup\{r\delta \, : \, f_e(r\delta) > 0\}$
			\EndProcedure
		\end{algorithmic}
	\end{algorithm}
	\subsection{The case with arrivals before time $0$}
	\begin{algorithm}[ht]
		\caption{Equilibrium arrival distribution for the case with arrivals before time $0$.}	\label{alg:w}
		\begin{algorithmic}[1]	
			\scriptsize
			\State \textbf{Input:} $\lambda, \, \mu, \alpha, \, \beta, \, \delta$ 
			\State \textbf{Output:} equilibrium arrival distribution $F_e$, including $w$, $t_w$, and $f_e$ on $[-w, t_w]$
			\Procedure{$f_{\mathcal{W}}$}{$\lambda, \mu, \alpha, \beta, \delta$}\Comment{Use Bisection to obtain $w$}
			\State \textbf{Initialization:} $t_1 = 0, \, t_2 =\displaystyle \frac{\lambda(\alpha+\beta)}{\mu\alpha}, \, w=\displaystyle \frac{t_1+t_2}{2}$ \Comment{Since $f_e(t) = \displaystyle\frac{\mu\alpha}{\lambda(\alpha+\beta)}$ for $t \in [-w,0]$, $0<w < \displaystyle \frac{\lambda(\alpha+\beta)}{\mu\alpha}$.}
			\While{$t_2-t_1 > 10^{-6}$} \Comment{Set the tolerance of the bisection method as $10^{-6}$}
			\State $c = \alpha w$ \Comment{The expected cost faced by a customer arriving at time $0$}
		    \State $p =\displaystyle w \, \frac{\mu \, \alpha}{\lambda (\alpha+\beta)}, \,~ P_k(r \delta) = \left(\displaystyle \frac{\left(\lambda p\right)^k \, e^{-\lambda p}}{k!}\right)_{k = 0:K}$ \Comment{The queue length distribution at time $0$}
			\State $r=1$
			\Do
			\State calculate $P_k(r \delta)$ for $k = 0,1,\ldots, K$ using Equations \eqref{eq:Qdynamics1}-\eqref{eq:Qdynamics3}
			\State$f_e(r\delta) = \displaystyle \frac{(1-P_0(r \delta))\mu}{\lambda}-\frac{\beta \mu}{(\alpha+\beta)\lambda}$ \Comment{Arrival distribution density when $-w \leq r \delta \leq t_w$}
			\State $r = r+1$;
			\State $F_e = p+ \delta \,f_e \bm{e}$, where $\bm{e}$ is a vector of $1$'s of the appropriate size.
			\doWhile{$F_e < 1$ and $\min f \geq 0$} \Comment{The calculation of $f_e$ stops if $F_e \geq 1$ or $f_e(t)< 0$}
			\If{$F_e >= 1$} $\quad t_2 = w$  	\Comment{Update the range for $w$}%, \, \min f >= 0$} 
			\Else $\quad t_1 = w$ %{$F < 1, \, \min(f) < 0$}
			\EndIf
			\State $w = \displaystyle\frac{t_1+t_2}{2}$
			\EndWhile
			\State $t_w  = \sup\{r\delta \, : \, r \geq 1, f_e(r\delta) > 0\}$
			\EndProcedure
		\end{algorithmic}
	\end{algorithm}
\end{appendices}
\end{document}